\numberwithin{equation}{section}
\numberwithin{figure}{section}
  \theoremstyle{remark}
  \newtheorem{rem}{Remark}
  \theoremstyle{plain}
  \newtheorem{prop}{Proposition}
  \theoremstyle{definition}
  \newtheorem{defn}{Definition}
\theoremstyle{plain}
\newtheorem{thm}{Theorem}
\def\C {\mathbb C}
\def\B{\mathbb B}
\newcommand{\cl}{\overline}
\newcommand{\norm}[1]{\left\| #1\right\|}
\newcommand{\p}{\partial}
\newcommand{\R}{\mathbb R}
\newcommand{\set}[1]{\left\{ #1\right\}}
\newcommand{\To}{\longrightarrow}
\newcommand{\W}{\Omega}
\newcommand{\z}{\zeta}
\begin{document}

\title{On the Szeg\H{o} Metric}

\author{David Barrett, Lina Lee}

\thanks{{\em 2010 Mathematics Subject Classification:}
32F45}

\thanks{The first author was supported in part by NSF grant number DMS-0901205.}

\begin{abstract}
We introduce a new biholomorphically invariant metric based on Fefferman's invariant Szeg\H{o} kernel and investigate the relation of the new metric to the Bergman and Carathéodory metrics.  A key tool is a new absolutely invariant function assembled from the Szeg\H{o} and Bergman kernels.
\end{abstract}

\maketitle

\section{Introduction}

In this paper we introduce the Szeg\H{o} metric, which is defined
similarly to the Bergman metric using the Szeg\H{o} kernel instead
of the Bergman kernel. The well-known Szeg\H{o} kernel $S(z,\zeta)$
is a reproducing kernel for $H^{2}(\partial\Omega)$ (the
closure in $L^{2}(\partial\Omega)$ of the set of holomorphic functions that are continuous up
to the boundary); thus \[
f(z)=\int_{\partial\Omega}S(z,\zeta)f(\zeta)\, d\sigma_E(\zeta),\quad\forall f\in H^{2}(\partial\Omega)\]
where $\sigma_E$ stands for the Euclidean surface measure on $\partial\Omega$.
The problem with this definition though is that, unlike the volume
measure on $\Omega$, the Euclidean surface measure is not transformed
nicely under a biholomorphic mapping. To resolve this issue, Fefferman
introduced the Fefferman surface area measure, $\sigma_{F}$ (p.\,259 of \cite{Fefferman79}). 

We define the Szeg\H{o} metric using the Szeg\H{o} kernel with respect
to the Fefferman surface area measure. Hence it is invariant under
biholomorphic mappings. 

In section 2, we provide background information on the Fefferman surface
measure and define the Szeg\H{o} metric. In section 3, we introduce
a biholomorphically invariant function $SK_{\Omega}(z,w)$ which serves to compare the Bergman and Szeg\H{o} kernels and then proceed to use this function to derive a number of asymptotic results relating the 
Szeg\H{o} and Bergman metrics.
In section 4, we show that the Szeg\H{o} metric is always greater
than or equal to the Carathéodory metric. In section 5 we show that  there is no universal upper bound or positive lower bound for the ratio of the Szeg\H{o} and Bergman metrics.

\medskip

{\bf Standing assumption.} We assume throughout this paper that $\W=\set{\rho<0}\subset\subset\C^{n}$ is a strongly pseudoconvex
domain with $C^{\infty}$ boundary.  (We note however that the Szeg\H{o} kernel and metric discussed in this paper will be naturally interpretable on many other domains; transformation laws such as
Propositions \ref{prop:Feff-trans},\,\ref{prop:Sz-ker-trans},\,\ref{Sz-met-trans} and Theorem \ref{thm:SK-trans} below
will hold with additional hypotheses on $\Phi$ as needed.)

\section{Background}

Let $H^{2}(\W)$ be the closure in $L^{2}(\p\W)$ of $A(\W)=\mathcal{O}(\W)\cap C(\cl\W)$.  Then there exists
a sesqui-holomorphic Szeg\H{o} kernel $S(z,\cdot)$ such that \begin{equation}
f(z)=\int_{\p\W}S(z,\z)f(\z)\,d\sigma_{F}(\zeta),\quad\forall f\in H^{2}(\W)\label{eq:szegokernel}
\end{equation}
where $\sigma_{F}$ is the Fefferman measure defined as follows:

\begin{align*}
d\sigma_{F}\wedge d\rho&=c_{n}\sqrt[n+1]{-\det\left(\begin{array}{cc}
0 & \rho_{\cl k}\\
\rho_{j} & \rho_{j\cl k}\end{array}\right)_{1\le j,k\le n}}\,dV
\intertext{or equivalently}
d\sigma_{F}&=c_{n}\sqrt[n+1]{-\det\left(\begin{array}{cc}
0 & \rho_{\cl k}\\
\rho_{j} & \rho_{j\cl k}\end{array}\right)_{1\le j,k\le n}}\;\frac{d\sigma_{E}}{\norm{d\rho}},
\end{align*}
where $\sigma_{E}$ is the usual Euclidean surface measure and $\rho_{j}=\frac{\partial\rho}{\partial z_{j}}$, $\rho_{j\overline{k}}=\frac{\partial\rho}{\partial z_{j}\partial\overline{z}_{k}}$.

 Note
that the surface measure $\sigma_{F}$ does not depend on the choice of the defining function $\rho$;  one can check this letting $\tilde{\rho}=h\rho$,
where $h>0$ is a smooth function, and calculating $d\sigma_{F}$
with $\tilde{\rho}$. 

\begin{rem}\label{r:cn}
The constant $c_n$ used above is a dimensional constant which was left unspecified in \cite{Fefferman79} but has been assigned different values later for convenience in different contexts:  for example, $c_{n}=2^{2n/\left(n+1\right)}$
in \cite{Barrett06} and $c_{n}=1$ in \cite{Hirachi04}.
\end{rem}

\begin{prop} \label{prop:Feff-trans}
Let $\Phi:\W_{1}\To\W_{2}$ be a biholomorphic mapping. Then we have\[
\int_{\p\W_{2}}|f|^{2}\,d\sigma_{F}^{\p\W_{2}}=\int_{\p\W_{1}}|f\circ\Phi|^{2}|\det J_{\C}\Phi|^{\frac{2n}{n+1}}\,d\sigma_{F}^{\p\W_{1}},\]
where $\sigma_{F}^{\partial\Omega_{j}}$ denotes the Fefferman measure
on $\Omega_{j}$ for $j=1,2$ and $J_{\mathbb{C}}\Phi$ is the complex
Jacobian matrix of $\Phi$. \end{prop}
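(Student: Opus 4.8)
The plan is to reduce the integral identity to the pointwise transformation law for the measure itself, namely
\[
\Phi^{*}d\sigma_{F}^{\p\W_{2}}=\Abs{\det J_{\C}\Phi}^{\frac{2n}{n+1}}\,d\sigma_{F}^{\p\W_{1}}\quad\text{on }\p\W_{1},
\]
and then to recover the stated equality by applying the change-of-variables formula $\int_{\p\W_{2}}\w=\int_{\p\W_{1}}\Phi^{*}\w$ to $\w=|f|^{2}\,d\sigma_{F}^{\p\W_{2}}$, using $\Phi^{*}|f|^{2}=|f\of\Phi|^{2}$. This is legitimate because a biholomorphism restricts to an orientation-preserving diffeomorphism of the boundaries (its real Jacobian determinant equals $\Abs{\det J_{\C}\Phi}^{2}>0$).

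Since $\sigma_{F}$ is independent of the defining function, I am free to choose compatible defining functions: fix $\rho_{2}$ for $\W_{2}$ and set $\rho_{1}=\rho_{2}\of\Phi$, which defines $\W_{1}$ because $\Phi$ is biholomorphic. The crux is the behavior of the bordered determinant
\[
J(\rho)=-\det\begin{pmatrix}0 & \rho_{\cl k}\\ \rho_{j} & \rho_{j\cl k}\end{pmatrix}
\]
under this substitution. Writing $A=J_{\C}\Phi$, the chain rule shows that the column $(\rho_{j})$, the row $(\rho_{\cl k})$, and the complex Hessian $(\rho_{j\cl k})$ of $\rho_{1}$ are obtained from the corresponding objects of $\rho_{2}$ (evaluated at $\Phi(z)$) through multiplication by $A^{T}$, by $\cl A$, and by the sandwich $A^{T}(\cdot)\cl A$, respectively. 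The decisive point is that holomorphicity of $\Phi$ kills the second-order cross terms, so that no undifferentiated first derivatives of $\rho_{2}$ survive in the Hessian; this is exactly the assertion that the full $(n+1)\times(n+1)$ matrix transforms by congruence,
\[
\begin{pmatrix}0 & (\rho_{1})_{\cl k}\\ (\rho_{1})_{j} & (\rho_{1})_{j\cl k}\end{pmatrix}=B\begin{pmatrix}0 & (\rho_{2})_{\cl k}\\ (\rho_{2})_{j} & (\rho_{2})_{j\cl k}\end{pmatrix}B^{*},\qquad B=\begin{pmatrix}1 & 0\\ 0 & A^{T}\end{pmatrix}.
\]
Taking determinants gives $J(\rho_{1})=\Abs{\det A}^{2}\,\br{J(\rho_{2})\of\Phi}$, hence $\sqrt[n+1]{J(\rho_{1})}=\Abs{\det J_{\C}\Phi}^{\frac{2}{n+1}}\,\br{\sqrt[n+1]{J(\rho_{2})}\of\Phi}$.

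To finish I would pull back the defining relation $c_{n}\sqrt[n+1]{J(\rho_{2})}\,dV=d\sigma_{F}^{\p\W_{2}}\wedge d\rho_{2}$, combine the root identity above with the real Jacobian relation $\Phi^{*}dV=\Abs{\det J_{\C}\Phi}^{2}\,dV$ and the exact identity $\Phi^{*}d\rho_{2}=d(\rho_{2}\of\Phi)=d\rho_{1}$, and collect exponents via $2-\frac{2}{n+1}=\frac{2n}{n+1}$ to obtain
\[
\Phi^{*}d\sigma_{F}^{\p\W_{2}}\wedge d\rho_{1}=\Abs{\det J_{\C}\Phi}^{\frac{2n}{n+1}}\,d\sigma_{F}^{\p\W_{1}}\wedge d\rho_{1}.
\]
Cancelling the common transverse factor $d\rho_{1}$ — legitimate since wedging a $(2n-1)$-form on $\p\W_{1}$ with $d\rho_{1}$ recovers its restriction to the boundary — yields the pointwise law and completes the proof. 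I expect the main obstacle to be the bookkeeping behind the congruence identity: one must check carefully that the mixed second derivatives of $\Phi$ vanish (because $\Phi$ is holomorphic) so that the border of first derivatives is transported consistently, after which the determinant, Jacobian, and exponent arithmetic are routine.
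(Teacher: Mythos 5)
Your proposal is correct and follows essentially the same route as the paper's own proof: choose the defining function $\rho_{1}=\rho_{2}\circ\Phi$, use the congruence (sandwich) identity for the bordered Hessian to obtain $J(\rho_{1})=\left|\det J_{\C}\Phi\right|^{2}\left(J(\rho_{2})\circ\Phi\right)$, pull back the defining relation for $\sigma_{F}$ using $\Phi^{*}dV=\left|\det J_{\C}\Phi\right|^{2}dV$, and cancel the transverse factor $d\rho_{1}$. The only addition worth making is an explicit appeal to Fefferman's theorem that $\Phi$ extends to a diffeomorphism of $\cl{\W}_{1}$ onto $\cl{\W}_{2}$ (cited in the paper), which your argument tacitly assumes when pulling boundary forms back along $\Phi$.
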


\begin{proof}
Recall that $\Phi$ extends to a diffeomorphism between $\cl\W_1$ and $\cl\W_2$ \cite{Fefferman74}.

Let $\Phi:\W_{1}\To\W_{2}$ be a biholomorphic mapping and $\W_{2}=\set{\rho<0}$.
Then we have \[
d\sigma_{F}^{\p\W_{1}}\wedge d(\rho\circ\Phi)=c_{n}\sqrt[n+1]{-\det\left(\begin{array}{cc}
0 & (\rho\circ\Phi)_{\cl k}\\
(\rho\circ\Phi)_{j} & (\rho\circ\Phi)_{j\cl k}\end{array}\right)}\,dV_{\W_{1}}.\]
Since\[
\left(\begin{array}{cc}
0 & (\rho\circ\Phi)_{\cl k}\\
(\rho\circ\Phi)_{j} & (\rho\circ\Phi)_{j\cl k}\end{array}\right)=\left(\begin{array}{cc}
1 & 0\\
0 & J_{\C}\Phi\end{array}\right)\left(\begin{array}{cc}
0 & \rho_{\cl k}\\
\rho_{j} & \rho_{j\cl k}\end{array}\right)\left(\begin{array}{cc}
1 & 0\\
0 & \cl{J_{\C}\Phi}\end{array}\right),\]
we get\[
\det\left(\begin{array}{cc}
0 & (\rho\circ\Phi)_{\cl k}\\
(\rho\circ\Phi)_{j} & (\rho\circ\Phi)_{j\cl k}\end{array}\right)=\det\left(\begin{array}{cc}
0 & \rho_{\cl k}\\
\rho_{j} & \rho_{j\cl k}\end{array}\right)\left|\det J_{\C}\Phi\right|^{2}.\]
Therefore we have\begin{align*}
d\sigma_{F}^{\p\W_{1}}\wedge d(\rho\circ\Phi)
&=c_{n}\left|\det J_{\C}\Phi\right|^{2/(n+1)}\cdot\sqrt[n+1]{-\det\left(\begin{array}{cc}
0 & \rho_{\cl k}\\
\rho_{j} & \rho_{j\cl k}\end{array}\right)}\left|\det J_{\R}\Phi^{-1}\right|^2\,dV_{\W_{2}}\\
&=c_{n}\left|\det J_{\C}\Phi\right|^{2/(n+1)}\cdot\left|\det J_{\C}\Phi\right|^{-2}\cdot\sqrt[n+1]{-\det\left(\begin{array}{cc}
0 & \rho_{\cl k}\\
\rho_{j} & \rho_{j\cl k}\end{array}\right)}\,dV_{\W_{2}}\\
&=|\det J_{\C}\Phi|^{-2n/(n+1)}d\sigma_{F}^{\p\W_{2}}\wedge d\rho
\end{align*}
and it follows that $d\sigma_{F}^{\p\W_{2}}$ pulls back to $|\det J_{\C}\Phi|^{\frac{2n}{n+1}}\,d\sigma_{F}^{\p\W_{1}}$.
\end{proof}

\begin{prop}\label{prop:Sz-ker-trans}
Let $\Phi:\W_{1}\To\W_{2}$, $\W_{1},\W_{2}\subset\C^{n}$ be a biholomorphic
mapping. Assume there exists a well-defined holomorphic branch of  $\left(\det J_{\C}\Phi(z)\right)^{n/(n+1)}$
on $\Omega_{1}$. Then we have 
\begin{equation}
S_{\W_{1}}(z,w)=S_{\W_{2}}(\Phi(z),\Phi(w))(\det J_{\C}\Phi(z))^{n/(n+1)}\left(\cl{\det J_{\C}\Phi(w)}\right)^{n/(n+1)},\label{eq:Szw}\end{equation}
where $S_{\Omega_{j}}(z,w)$ is the Szeg\H{o} kernel on
$\Omega_{j}$ for $j=1,2$. \end{prop}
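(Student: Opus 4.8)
The plan is to encode the multiplier $(\det J_{\C}\Phi)^{n/(n+1)}$ as the symbol of a weighted composition operator that identifies the two Hardy spaces, and then to appeal to the uniqueness of the reproducing kernel. Write $u(z)=(\det J_{\C}\Phi(z))^{n/(n+1)}$ for the given holomorphic branch on $\W_1$. Since $\Phi$ extends to a diffeomorphism of $\cl\W_1$ onto $\cl\W_2$ (as recalled in the proof of Proposition~\ref{prop:Feff-trans}) and $\det J_{\C}\Phi$ is nowhere zero, $u$ extends continuously and zero-free to $\cl\W_1$. I then define $T\colon H^2(\W_2)\To H^2(\W_1)$ by $(Tf)(z)=u(z)\,f(\Phi(z))$. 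First I would verify that $T$ maps $A(\W_2)$ into $A(\W_1)$---immediate since $u$ and $f\of\Phi$ are holomorphic and extend continuously to the boundary---and that the analogous operator built from $\Phi^{-1}$ and $1/(u\of\Phi^{-1})$ is a two-sided inverse, so that $T$ restricts to a bijection between the dense subspaces $A(\W_2)$ and $A(\W_1)$.

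The second step is to show that $T$ is isometric. Proposition~\ref{prop:Feff-trans} states precisely that
\[
\norm{f}_{H^2(\W_2)}^2=\int_{\p\W_1}\abs{f\of\Phi}^2\,\abs{\det J_{\C}\Phi}^{2n/(n+1)}\,d\sigma_F^{\p\W_1},
\]
and on $\p\W_1$ we have $\abs{\det J_{\C}\Phi}^{2n/(n+1)}=\abs{u}^2$, so the right-hand side is exactly $\norm{Tf}_{H^2(\W_1)}^2$. Polarizing this quadratic identity gives $\ip{Tf,Tg}_{H^2(\W_1)}=\ip{f,g}_{H^2(\W_2)}$ for all $f,g$; combined with the first step, $T$ extends to a unitary isomorphism of Hilbert spaces.

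It then remains to transport the kernel. Fixing $w\in\W_1$ and writing $S_{\W_2}(\cdot,\Phi(w))$ for the function $z\mapsto S_{\W_2}(z,\Phi(w))\in H^2(\W_2)$, one has the pointwise identity
\[
u(z)\,\cl{u(w)}\,S_{\W_2}(\Phi(z),\Phi(w))=\cl{u(w)}\,\bigl(T\,S_{\W_2}(\cdot,\Phi(w))\bigr)(z).
\]
For arbitrary $h=Tf\in H^2(\W_1)$, the reproducing property on $\W_2$ together with unitarity of $T$ yields
\[
\ip{h,\;u(\cdot)\,\cl{u(w)}\,S_{\W_2}(\Phi(\cdot),\Phi(w))}_{H^2(\W_1)}=u(w)\,\ip{f,\,S_{\W_2}(\cdot,\Phi(w))}_{H^2(\W_2)}=u(w)\,f(\Phi(w))=h(w).
\]
Thus $u(z)\cl{u(w)}\,S_{\W_2}(\Phi(z),\Phi(w))$ reproduces $H^2(\W_1)$ and is sesqui-holomorphic in $(z,w)$, so by uniqueness of the Szeg\H{o} kernel it equals $S_{\W_1}(z,w)$, which is \eqref{eq:Szw}. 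The main obstacle is the first two steps---verifying that $T$ is a genuine unitary between the two $H^2$ spaces, which depends on the boundary regularity of $\Phi$ and on the continuous, zero-free extension of the branch $u$; once $T$ is available the kernel identity is purely formal.
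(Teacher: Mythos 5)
Your proof is correct, and it reaches \eqref{eq:Szw} by a different organization than the paper's, though both rest on the same two ingredients: Proposition \ref{prop:Feff-trans} and the reproducing property. The paper argues directly: since the right-hand side of \eqref{eq:Szw} is anti-holomorphic in $w$, it suffices to check that it reproduces $H^2(\W_1)$; the paper inserts the candidate kernel into the reproducing integral over $\p\W_1$, changes variables by $\Phi$ using Proposition \ref{prop:Feff-trans}, and observes that the factor $\left(\cl{\det J_{\C}\Phi(\Phi^{-1}(\tilde{w}))}\right)^{n/(n+1)}\left|\det J_{\C}\Phi^{-1}(\tilde{w})\right|^{2n/(n+1)}$ collapses to the holomorphic function $\left(\det J_{\C}\Phi(\Phi^{-1}(\tilde{w}))\right)^{-n/(n+1)}$, whereupon the Szeg\H{o} kernel of $\W_2$ reproduces the integrand at $\Phi(z)$ and everything cancels to give $f(z)$. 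Your unitary $T$ encodes exactly that cancellation in operator language: the statement that $T^{-1}$ is again a weighted composition operator with \emph{holomorphic} weight $1/(u\of\Phi^{-1})$ is precisely the paper's collapsing-factor observation, and your isometry step is the same invocation of Proposition \ref{prop:Feff-trans}. What your packaging buys: it is more careful on a point the paper glosses over, namely the legitimacy of applying the reproducing property of $S_{\W_2}$ --- you only ever apply it to elements of $H^2(\W_2)$ and then extend by density, polarization, and Riesz uniqueness, whereas the paper tacitly assumes that $\left(\det J_{\C}\Phi\of\Phi^{-1}\right)^{-n/(n+1)}\cdot\left(f\of\Phi^{-1}\right)$ lies in $H^2(\W_2)$ for every $f\in H^2(\W_1)$; it also isolates a reusable principle (unitary weighted composition operators transport reproducing kernels) that applies verbatim in other weighted settings. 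What the paper's directness buys: brevity --- the whole proof is a single change-of-variables computation, with no need for the bijection of the $A$-spaces, the boundary continuity and zero-freeness of the branch $u$ on $\cl{\W}_1$, or the density and surjectivity arguments that your construction of $T$ requires.
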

\begin{proof}
It is obvious that the right hand side of \eqref{eq:Szw} is anti-holomorphic with respect to $w$, so it will suffice to show that it also satisfies
the reproducing property. 

Let $f\in H^{2}(\W_{1})$. Then we get 
\begin{multline*}
\int_{\p\W_{1}}S_{\W_{2}}(\Phi(z),\Phi(w))\left(\det J_{\C}\Phi(z)\right)^{n/(n+1)}\left(\cl{\det J_{\C}\Phi(w)}\right)^{n/(n+1)}f(w)\, d\sigma_{F}^{\p\W_{1}}(w)\\
=\left(\det J_{\C}\Phi(z)\right)^{n/(n+1)}\int_{\p\W_{2}}S_{\W_{2}}(\Phi(z),\tilde{w})\left(\cl{\det J_{\C}\Phi(\Phi^{-1}(\tilde{w})}\right)^{n/(n+1)}\cdot\\
f(\Phi^{-1}(\tilde{w}))|\det J_{\C}\Phi^{-1}(\tilde{w})|^{2n/(n+1)}d\sigma_{F}^{\p\W_{2}}(\tilde{w}).
\end{multline*}
Note that $\left(\cl{\det J_{\C}\Phi(\Phi^{-1}(\tilde{w})}\right)^{n/(n+1)}|\det J_{\C}\Phi^{-1}(\tilde{w})|^{2n/(n+1)}$
is holomorphic with respect to $\tilde{w}$ since we have \begin{align*}
\left(\cl{\det J_{\C}\Phi(\Phi^{-1}(\tilde{w})}\right)&^{n/(n+1)}|\det J_{\C}\Phi^{-1}(\tilde{w})|^{2n/(n+1)}\\
&=\left(\cl{\det J_{\C}\Phi(\Phi^{-1}(\tilde{w})}\right)^{n/(n+1)}|\det J_{\C}\Phi(\Phi^{-1}(\tilde{w}))|^{-2n/(n+1)}\\
&=\left(\det J_{\C}\Phi(\Phi^{-1}(\tilde{w}))\right)^{-n/(n+1)}.\end{align*}
 Hence we obtain
 \begin{align*}
\int_{\p\W_{1}}S_{\W_{2}}(&\Phi(z),\Phi(w))\left(\det J_{\C}\Phi(z)\right)^{n/(n+1)}\left(\cl{\det J_{\C}\Phi(w)}\right)^{n/(n+1)}f(w)\, d\sigma_{F}^{\p\W_{1}}(w)\\
&=\left(\det J_{\C}\Phi(z)\right)^{n/(n+1)}\left(\det J_{\C}\Phi(\Phi^{-1}(\Phi(z)))\right)^{-n/(n+1)}f\left(\Phi^{-1}(\Phi(z))\right)\\
&=f(z)\end{align*} as required.
\end{proof}

\begin{defn}
We define the Szeg\H{o} metric on $\W$ at $z$ in the direction $\xi$,
$F_{S}^{\Omega}(z,\xi)$, as follows:\[
F_{S}^{\Omega}(z,\xi)=\left(\sum_{j,k=1}^{n}\frac{\p^{2}\log S_{\Omega}(z,z)}{\p z_{j}\p\cl z_{k}}\xi_{j}\cl{\xi_{k}}\right)^{1/2}.\]
\end{defn}

\begin{rem}
Note that one can write $S_{\Omega}(z,w)=\sum_{\alpha}\phi_{\alpha}(z)\overline{\phi_{\alpha}(w)}$
where the $\phi_{\alpha}$'s form an orthnormal basis of $H^{2}(\partial\Omega)$.
Hence $S_{\Omega}(z,z)$ is a positive strongly plurisubharmonic function, ensuring that $F_{S}^{\Omega}(z,\xi)$ is a genuine K\"ahler metric.  The orthonormal expansion may also be used to show that 
\begin{equation}\label{eq:S-v-E}
F_{S}^{\Omega}(z,\xi) \ge \gamma_\Omega 
\,|\xi| \text{ for some positive constant } \gamma_\Omega.
\end{equation}

 \end{rem}
 
 \begin{rem} \label{rem:cn-in-metric}
Note that $F_{S}^{\Omega}(z,\xi)$ does not depend on the choice of the dimensional constant $c_n$ discussed in Remark \ref{r:cn}.
\end{rem}
 
\begin{prop} \label{Sz-met-trans}
The Szeg\H{o} metric is invariant under biholomorphic mappings satisfying the hypotheses of Proposition \ref{prop:Sz-ker-trans}, i.e,
if $\Phi:\W_{1}\To\W_{2}$ is such a mapping and $z\in\W_{1}$,
$\xi\in T_{z}\W_{1}$, then \[
F_{S}^{\W_{1}}(z,\xi)=F_{S}^{\W_{2}}(\Phi(z),J_{\mathbb{C}}\Phi(z)\xi).\]
\end{prop}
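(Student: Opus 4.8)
The plan is to restrict the kernel transformation law to the diagonal and then differentiate, the whole point being that the non-invariant Jacobian factor becomes pluriharmonic and therefore disappears from the complex Hessian. Setting $w=z$ in \eqref{eq:Szw} gives
\[
S_{\W_1}(z,z)=S_{\W_2}(\Phi(z),\Phi(z))\,\abs{\det J_\C\Phi(z)}^{2n/(n+1)},
\]
and taking logarithms,
\[
\log S_{\W_1}(z,z)=\log S_{\W_2}(\Phi(z),\Phi(z))+\frac{2n}{n+1}\log\abs{\det J_\C\Phi(z)}.
\]
First I would note that the hypothesis of Proposition \ref{prop:Sz-ker-trans} furnishes a holomorphic branch of $(\det J_\C\Phi)^{n/(n+1)}$, so the last term is locally the real part of a holomorphic function, hence pluriharmonic; its mixed Hessian $\p^2/\p z_j\p\cl z_k$ therefore vanishes. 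Thus the Hessian of $\log S_{\W_1}(z,z)$ agrees with that of $\log S_{\W_2}(\Phi(z),\Phi(z))$.

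Next I would carry out the chain rule. Writing $g(\zeta)=\log S_{\W_2}(\zeta,\zeta)$ and using that $\Phi$ is holomorphic---so that $\p\Phi_\alpha/\p\cl z_j=0$ and each entry $(J_\C\Phi)_{\alpha j}$ is itself a holomorphic function killed by $\p/\p\cl z_k$---a short computation gives
\[
\frac{\p^2}{\p z_j\p\cl z_k}\,g(\Phi(z))=\sum_{\alpha,\beta}(J_\C\Phi(z))_{\alpha j}\,\frac{\p^2 g}{\p\zeta_\alpha\p\cl\zeta_\beta}\Big|_{\Phi(z)}\,\cl{(J_\C\Phi(z))_{\beta k}}.
\]
Contracting with $\xi_j\cl\xi_k$ and setting $\eta=J_\C\Phi(z)\xi$ collapses the fourfold sum into $\sum_{\alpha,\beta}\frac{\p^2 g}{\p\zeta_\alpha\p\cl\zeta_\beta}\eta_\alpha\cl\eta_\beta$, which is precisely $\big(F_S^{\W_2}(\Phi(z),J_\C\Phi(z)\xi)\big)^2$. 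Combining this with the first step and taking square roots yields the asserted identity.

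The only delicate point I anticipate is the pluriharmonicity step: it is exactly here that the branch hypothesis of Proposition \ref{prop:Sz-ker-trans} is needed, and it is what allows the non-invariant prefactor to drop out at the level of the metric. As a byproduct the same observation re-proves Remark \ref{rem:cn-in-metric}, since altering the dimensional constant $c_n$ rescales $\sigma_F$ by a constant and hence changes $\log S_{\W}(z,z)$ only by an additive constant, which the Hessian annihilates. The remaining chain-rule computation is routine bookkeeping once one tracks carefully which derivatives vanish by holomorphicity.
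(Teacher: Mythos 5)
Your proof is correct and follows essentially the same route as the paper: restrict \eqref{eq:Szw} to the diagonal, take logarithms, kill the Jacobian term because it is pluriharmonic (the paper writes it as a holomorphic plus an antiholomorphic logarithm, which is the same observation), and finish with the chain rule. One minor quibble: pluriharmonicity of $\log\abs{\det J_{\C}\Phi}$ requires only that $\det J_{\C}\Phi$ be holomorphic and nonvanishing (a local branch of its logarithm always exists), so the global branch hypothesis of Proposition \ref{prop:Sz-ker-trans} is not what makes this step work --- rather, it is what makes the kernel transformation law \eqref{eq:Szw} available in the first place.
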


\begin{proof}
From \eqref{eq:Szw}, we have 
\[
S_{\W_{1}}(z,z)=S_{\W_{2}}(\Phi(z),\Phi(z))\left|\det J_{\C}\Phi(z)\right|^{2n/(n+1)}.\]
Hence we have\[
\log S_{\W_{1}}(z,z)=\log S_{\W_{2}}(\Phi(z),\Phi(z))+\frac{n}{n+1}\left[\log\left(\det J_{\C}\Phi(z)\right)+\log\left(\cl{\det J_{\C}\Phi(z)}\right)\right].\]
Let $\Phi(z)=w$. Then 
\begin{align*}
\sum_{j,k}\frac{\p^{2}\log S_{\W_{1}}(z,z)}{\p z_{j}\p\cl{z_{k}}}\xi_{j}\cl{\xi_{k}}
&=\sum_{j,k}\sum_{l,m}\frac{\p^{2}\log S_{\W_{2}}(\Phi(z),\Phi(z))}{\p w_{l}\p\cl{w_{m}}}\frac{\p w_{l}}{\p z_{j}}\frac{\p\cl{w_{m}}}{\p z_{k}}\xi_{j}\cl\xi_{k}\\
&=\sum_{l,m}\frac{\p^{2}\log S_{\W_{2}}(w,w)}{\p w_{l}\p\cl{w_{m}}}\left(J_{\mathbb{C}}\Phi\left(z\right)\xi\right)_{l}\left(\overline{J_{\mathbb{C}}\Phi\left(z\right)\xi}\right)_{m}.\end{align*}

\end{proof}

\subsection{The Szeg\H{o} metric on the unit ball}

Let $\mathbb{B}^{n}=\left\{ \rho=|z|^{2}-1<0\right\} \subset\mathbb{C}^{n}$.
Then \[
\det\left(\begin{array}{cc}
0 & \rho_{\cl k}\\
\rho_{j} & \rho_{j\cl k}\end{array}\right)=-1\quad\text{on }\p\B^{n}.\]
Hence $d\sigma_{F}^{\partial\mathbb{B}^{n}}=\frac{c_{n}}{2}\, d\sigma_{E}^{\partial\mathbb{B}^{n}}$
for $S=\set{|z|^{2}=1}\subset\C^{n}$ and the Szeg\H{o} kernel for
the unit ball in $\C^{n}$ is given by\begin{equation}
S(z,\z)=\frac{1}{c_{n}}\frac{(n-1)!}{\pi^{n}}\frac{1}{(1-z\cdot\cl\z)^{n}}.\label{eq:szegokernel-1}\end{equation}
One can rewrite \eqref{eq:szegokernel-1} as follows:
\[
\int_{\p\mathbb{B}^{n}}S(z,\z)f(\z)\,d\sigma_{F}(\z)=\int_{\p\mathbb{B}^{n}}\frac{(n-1)!}{\pi^{n}}\frac{1}{(1-z\cdot\cl\z)^{n}}f(\z)\,d\sigma_{E}(\z)=f(z),\quad\forall f\in H^{2}(\p\B^{n}).\]

If we calculate the Szeg\H{o} metric for $\mathbb{B}^{n}$ at the
origin, we get 
\begin{align*}
\log S(z,z)&=\log\left(\frac{(n-1)!}{c_{n}\cdot2\pi^{n}}\right)-n\log(1-|z|^{2}),\\
\intertext{and}
\frac{\p^{2}\log S(z,z)}{\p z_{j}\p\cl z_{k}}\Big|_{z=0} & =\begin{cases}
n\frac{\cl{z_{j}}z_{k}}{(1-|z|^{2})^{2}}\Big|_{z=0}=0, & j\neq k\\
n\frac{|z_{j}|^{2}}{(1-|z|^{2})^{2}}+n\frac{1}{(1-|z|^{2})}\Big|_{z=0}=n, & j=k\end{cases}.
\end{align*}
Hence we have
\begin{equation}
F_{S}^{\B^{n}}(0,\xi)=\sqrt{n}|\xi|.\label{eq:SMball}\end{equation}

\begin{rem}
Note that the Bergman metric on the unit ball in $\C^{n}$ evaluated
at the origin is given as \begin{equation}
F_{B}^{\B^{n}}(0,\xi)=\sqrt{n+1}|\xi|\label{eq:BMball}\end{equation}
and the Kobayashi or Carathéodory metric on the unit ball in $\C^{n}$
at the origin is given as 
\begin{equation}
F_{K}^{\B^{n}}(0,\xi)=F_{C}^{\mathbb{B}^{n}}(0,\xi)=|\xi|.\label{eq:CKMball}\end{equation}

Since all four metrics are invariant under the automorphism group of $\mathbb{B}^{n}$ which acts transitively on $\mathbb{B}^{n}$,  relations between the metrics at the origin will propagate throughout $\mathbb{B}^{n}$.  In particular, 
 from \eqref{eq:SMball}, \eqref{eq:BMball} and \eqref{eq:CKMball} we obtain
 \begin{equation}
F_{S}^{\mathbb{B}^{n}}(z,\xi)
=\sqrt{n}\,F_{C}^{\mathbb{B}^{n}}(z,\xi)
=\sqrt{n\,}F_{K}^{\mathbb{B}^{n}}(z,\xi)
=\sqrt{\tfrac{n}{n+1}}\,F_{B}^{\mathbb{B}^{n}}(z,\xi),
\quad\forall z\in\mathbb{B}^{n}.\label{eq:unitball}\end{equation}

\end{rem}

\section{An invariant function and some boundary asymptotics}
\begin{thm}\label{thm:SK-trans}
Let \begin{equation}
SK_{\W}(z,w)=\frac{S_{\W}(z,w)^{n+1}}{K_{\W}(z,w)^{n}},\label{SK}\end{equation}
where $S_{\W}$ and $K_{\W}$ are the Szeg\H{o} and Bergman kernels
on $\W$. Then $SK_{\W}(z,w)$ is invariant under biholomorphic mappings satisfying the hypotheses of Proposition \ref{prop:Sz-ker-trans}, i.e.,
if $\Phi:\W_{1}\To\W_{2}$ is such a mapping then we have\[
SK_{\W_{1}}(z,w)=SK_{\W_{2}}(\Phi(z),\Phi(w)).\]
\end{thm}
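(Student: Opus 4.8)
The plan is to combine the two transformation laws already established: Proposition \ref{prop:Sz-ker-trans} for the Szeg\H{o} kernel and the classical analogue for the Bergman kernel. The key observation is that the awkward fractional powers of $\det J_\C\Phi$ appearing in the Szeg\H{o} transformation law are precisely arranged so that raising to the power $n+1$ produces an integer power, which then cancels against the corresponding factor from the Bergman kernel raised to the power $n$.

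First I would recall the standard Bergman kernel transformation law, namely
\begin{equation*}
K_{\W_1}(z,w)=K_{\W_2}(\Phi(z),\Phi(w))\,\det J_\C\Phi(z)\,\cl{\det J_\C\Phi(w)},
\end{equation*}
which holds for any biholomorphism $\Phi:\W_1\To\W_2$ and requires no branch hypothesis since only the first power of the Jacobian appears. Next I would take the $n$-th power of this identity to match the denominator in the definition \eqref{SK} of $SK_\W$, obtaining a factor $(\det J_\C\Phi(z))^{n}(\cl{\det J_\C\Phi(w)})^{n}$.

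Then I would raise the Szeg\H{o} transformation law \eqref{eq:Szw} to the power $n+1$. Here the factor $(\det J_\C\Phi(z))^{n/(n+1)}$ becomes $(\det J_\C\Phi(z))^{n}$, and similarly for the $w$ variable; crucially, although \eqref{eq:Szw} involves a choice of holomorphic branch of the fractional power, after raising to the $(n+1)$-st power the result is the honest integer power $(\det J_\C\Phi(z))^{n}$, independent of the branch chosen. Thus the numerator of $SK_{\W_1}(z,w)$ acquires exactly the factor $(\det J_\C\Phi(z))^{n}(\cl{\det J_\C\Phi(w)})^{n}$.

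Forming the quotient $S_{\W_1}(z,w)^{n+1}/K_{\W_1}(z,w)^{n}$, the Jacobian factors in numerator and denominator are identical and cancel completely, leaving $SK_{\W_2}(\Phi(z),\Phi(w))$ as desired. I expect no genuine obstacle here: the proof is essentially an exponent-bookkeeping argument, and the only point requiring a word of care is confirming that the branch ambiguity in \eqref{eq:Szw} disappears upon taking the $(n+1)$-st power, so that no branch hypothesis beyond that of Proposition \ref{prop:Sz-ker-trans} is needed and the final identity is unambiguous.
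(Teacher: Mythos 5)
Your proposal is correct and follows essentially the same route as the paper: raise the Szeg\H{o} transformation law \eqref{eq:Szw} to the $(n+1)$-st power, raise the Bergman transformation law to the $n$-th power, and cancel the resulting identical Jacobian factors $\left(\det J_{\C}\Phi(z)\right)^{n}\left(\cl{\det J_{\C}\Phi(w)}\right)^{n}$ in the quotient. Your added observation that the branch ambiguity in the fractional power disappears after raising to the $(n+1)$-st power is a nice point of care that the paper leaves implicit.
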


\begin{proof}
It is a well-known fact (see for example section 6.1 in [7]) that \begin{equation}
K_{\W_{1}}(z,w)=\left(\det J_{\C}\Phi(z)\right)K_{\W_{2}}(\Phi(z),\Phi(w))\left(\det\cl{J_{\C}\Phi(w)}\right).\label{615}\end{equation}
Hence from \eqref{eq:Szw} and \eqref{615}, we get\begin{align*}
\frac{S_{\W_{1}}(z,w)^{n+1}}{K_{\W_{1}}(z,w)^{n}} & =\frac{S_{\W_{2}}(\Phi(z),\Phi(w))^{n+1}\left(\det J_{\C}\Phi(z)\right)^{n}\left(\cl{\det J_{\C}\Phi(w)}\right)^{n}}{K_{\W_{2}}(\Phi(z),\Phi(w))^{n}\left(\det J_{\C}\Phi(z)\right)^{n}\left(\det\cl{J_{\C}\Phi(w)}\right)^{n}}\\
 & =\frac{S_{\W_{2}}(\Phi(z),\Phi(w))^{n+1}}{K_{\W_{1}}(\Phi(z),\Phi(w))^{n}}.\end{align*}
\end{proof}

\begin{rem}
One can easily calculate $SK_{\mathbb{B}^{n}}(z,z)$, where
$\mathbb{B}^{n}$ is the unit ball in $\mathbb{C}^{n}$, using \eqref{eq:szegokernel-1}
and the well known formula 
\[
K_{\mathbb{B}^{n}}(z,w)=\frac{n!}{\pi^{n}}\frac{1}{\left(1-z\cdot\overline{w}\right)^{n+1}}\]
for the Bergman kernel on the unit ball to obtain
 \[
SK_{\mathbb{B}^{n}}(z,z)=\frac{1}{c_{n}^{n+1}}\frac{\left(n-1\right)!}{\left(n\pi\right)^{n}},\quad\forall z\in\mathbb{B}^{n}.\]

\end{rem}

For the remainder of this section we assume that 
the defining function
$\rho$ for $\Omega$ has been chosen to satisfy Fefferman's approximate Monge-Amp\`ere equation 
\begin{equation*}
 -\det\left(
 \begin{array}{cc}
0 & \rho_{\cl k}\\
\rho_{j} & \rho_{j\cl k}\end{array}
\right)_{1\le j,k\le n}
= 1 + O\left(|\rho|^{n+1}\right)
\end{equation*}
(see \cite{Fefferman76} -- we could also use the not-completely-smooth exact solution to this equation \cite{ChengYau80, LeeMelrose82}).

We set $r=-\rho$; thus $r>0$ in $\Omega$.

We have the following asymptotic expansions of the Bergman and Szeg\H{o} kernels  (see \cite{Fefferman74,  Graham, Hirachi04} and additional references cited in these papers, but the material we are quoting is set forth especially clearly  in section 1.1 and Lemma 1.2 from \cite{HirachiKomatsuNakazawa}):
\begin{align*}
K_{\Omega}(z,z) & =
\begin{cases}
\frac{n!}{\pi^{n}r^{n+1}}+\frac{\left(n-2\right)!\cdot q_{\Omega}}{ r^{n-1}}+O\left(\frac{1}{r^{n-2}}\right),& n\ge3\\
\frac{2}{\pi^{2}r^{3}}+ 3 \tilde q_{\Omega}\cdot\log r 
+O\left(1\right),&
n=2
\end{cases}
\\
S_{\Omega}(z,z) & =
\begin{cases}
\frac{\left(n-1\right)!}{c_n\pi^{n}r^{n}}+\frac{\left(n-3\right)!\cdot q_{\Omega}}{c_nr^{n-2}}+O\left(\frac{1}{r^{n-3}}\right), & n\ge4\\
\frac{2}{c_3\pi^{3}r^{3}}+\frac{ q_{\Omega}}{c_3r}+O\left(|\log r|\right), & n=3\\
\frac{1}{c_2\pi^{2}r^{2}}
+\mu_1
+\frac{\tilde q_{\Omega}}{c_2}\cdot r\log r
+O\left(r\right), & n=2
\end{cases},\end{align*}
where $\mu_1\in C^\infty(\cl\W)$ and $q_{\Omega}$ and $\tilde q_{\Omega}$ are certain local geometric boundary invariants -- in terms of Moser's normal form 
\cite{ChernMoser74} we have $q_\Omega=\frac{2}{3\pi^n} \left\| A_{2\bar 2}^0 \right\|^2$ for $n\ge3$ and $\tilde q_\Omega=-\frac{8}{\pi^2} A_{4 \bar 4}^0$ for $n=2$. Moreover, $r^{n+1}K_{\Omega}(z,z)\in C^{n+1-\epsilon}\left(\cl{\Omega}\right)$ and $r^n S_{\Omega}(z,z)\in C^{\max\{n,3\}-\epsilon}\left(\cl{\Omega}\right)$ for each $\epsilon>0$.
(The remainder terms are equal to a power of $r$ times a first-degree polynomial in $\log r$ with coefficients in $C^\infty(\cl\W)$; later in this section the remainder terms have a similar structure but with higher degree  in $\log r$.)

Combining these results we obtain the following.

\begin{thm}\label{thm:SK-asymp}  The function $SK_{\Omega}(z,z)$ satisfies
\begin{equation*}
SK_{\Omega}(z,z) \in 
\begin{cases}
C^{n-\epsilon}\left(\cl{\Omega}\right), & n\ge 3 \\
C^{4-\epsilon}\left(\cl{\Omega}\right), & n=2
\end{cases}
\end{equation*}
with asymptotics
\[
SK_{\Omega}(z,z)=\begin{cases}
\frac{\left(n-1\right)!}{c_n^{n+1}\left(n\pi\right)^{n}}+\frac{\left(n-3\right)!\cdot3q_{\Omega}}{c_n^{n+1}n^{n}}r^{2}+O\left(r^{3}\right), & n\ge4\\
\frac{2}{c_3^{4}\left(3\pi\right)^{3}}+\frac{q_{\Omega}}{c_3^{4}\cdot 9}r^{2}+O\left(r^{3}|\log r|\right), & n=3\\
\frac{1}{c_2^3 4\pi^{2}}+ \mu_2\, r^2 + \mu_3\, r^4\log r + \frac{3\pi^2 \tilde q_\Omega^2}{c_2^3 16} r^6 \log^2 r + O(r^6\log r), & n=2
\end{cases}\]
for $z$ close to the boundary, where $\mu_2, \mu_3\in C^\infty(\cl\W)$.
\end{thm}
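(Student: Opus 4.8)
The plan is to reduce the problem to the two bounded, nonvanishing boundary quantities $r^{n}S_\Omega(z,z)$ and $r^{n+1}K_\Omega(z,z)$ by means of the identity
\[
SK_\Omega(z,z)=\frac{S_\Omega(z,z)^{n+1}}{K_\Omega(z,z)^{n}}=\frac{\bigl(r^{n}S_\Omega(z,z)\bigr)^{n+1}}{\bigl(r^{n+1}K_\Omega(z,z)\bigr)^{n}},
\]
in which the singular powers of $r$ cancel exactly. The quoted asymptotics give $r^{n}S_\Omega\to\frac{(n-1)!}{c_n\pi^n}$ and $r^{n+1}K_\Omega\to\frac{n!}{\pi^n}$ at $\p\Omega$, both nonzero, so near the boundary $SK_\Omega(z,z)$ is a real-analytic function of this pair; simplifying the ratio of boundary limits immediately yields the leading constant $\frac{(n-1)!}{c_n^{n+1}(n\pi)^n}$.

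For the regularity statement I would feed the regularity $r^{n+1}K_\Omega\in C^{n+1-\epsilon}(\cl\Omega)$ and $r^{n}S_\Omega\in C^{\max\{n,3\}-\epsilon}(\cl\Omega)$ through this identity. Since the denominator is bounded away from $0$, the quotient is at least as regular as the rougher factor, which gives $C^{n-\epsilon}$ outright for $n\ge3$. For $n=2$ this crude bound yields only $C^{3-\epsilon}$, the obstruction being the $r^3\log r$ terms carried by both $r^2S_\Omega$ and $r^3K_\Omega$; the improvement to $C^{4-\epsilon}$ will come from showing that these terms cancel in the quotient.

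To extract the expansions I would factor out the boundary limits, $r^{n}S_\Omega=\frac{(n-1)!}{c_n\pi^n}(1+p)$ and $r^{n+1}K_\Omega=\frac{n!}{\pi^n}(1+q)$, and expand $(1+p)^{n+1}(1+q)^{-n}$. For $n\ge3$ the first corrections are the $q_\Omega$-terms $p\sim\frac{\pi^n q_\Omega}{(n-1)(n-2)}r^2$ and $q\sim\frac{\pi^n q_\Omega}{n(n-1)}r^2$; the linear part $(n+1)p-nq$ has $r^2$-coefficient $\pi^n q_\Omega\bigl(\frac{n+1}{(n-1)(n-2)}-\frac{1}{n-1}\bigr)=\frac{3\pi^n q_\Omega}{(n-1)(n-2)}$, and multiplying by the leading constant reproduces the stated coefficient $\frac{(n-3)!\,3q_\Omega}{c_n^{n+1}n^n}$, with remainder $O(r^3)$ (resp.\ $O(r^3|\log r|)$ when $n=3$) inherited from the kernel remainders.

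The case $n=2$ is the main obstacle. Here $p$ and $q$ begin with $r^3\log r$ terms of coefficients $\pi^2\tilde q_\Omega$ and $\frac{3\pi^2}{2}\tilde q_\Omega$ respectively, so the linear part $3p-2q$ has $r^3\log r$-coefficient $3\pi^2\tilde q_\Omega-2\cdot\frac{3\pi^2}{2}\tilde q_\Omega=0$: this cancellation is precisely what removes the regularity obstruction and upgrades $SK_\Omega$ to $C^{4-\epsilon}$. Because the kernel remainders in this subsection are powers of $r$ times \emph{first-degree} polynomials in $\log r$, a $\log^2 r$ term can first arise only at order $r^6$, and solely from the quadratic part $3p^2-6pq+3q^2$ evaluated on the two $r^3\log r$ terms; this produces $(3-9+\tfrac{27}{4})\pi^4\tilde q_\Omega^2\,r^6\log^2 r=\tfrac34\pi^4\tilde q_\Omega^2\,r^6\log^2 r$, which after multiplication by the prefactor $\frac{1}{4c_2^3\pi^2}$ gives the stated coefficient $\frac{3\pi^2\tilde q_\Omega^2}{16c_2^3}$. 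The remaining terms $\mu_2 r^2$ and $\mu_3 r^4\log r$ have smooth (uncomputed) coefficients and the $O(r^6\log r)$ remainder absorbs everything of higher order; all of this follows once one verifies that the log-polynomial-in-$r$ structure of the remainders is preserved under the powers, products, and nonvanishing division appearing in the identity above.
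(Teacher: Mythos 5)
Your proposal is correct and follows essentially the same route as the paper: the paper gives no explicit proof beyond the remark that the theorem follows by ``combining'' the quoted Bergman and Szeg\H{o} kernel expansions, and your computation is exactly that combination, via the identity $SK_{\Omega}(z,z)=\bigl(r^{n}S_{\Omega}\bigr)^{n+1}/\bigl(r^{n+1}K_{\Omega}\bigr)^{n}$, with the correct leading constant, the correct $r^{2}$-coefficients $\frac{3(n-3)!\,q_\Omega}{c_n^{n+1}n^n}$, and the key $n=2$ cancellation $3p-2q$ of the $r^{3}\log r$ terms together with the quadratic-part coefficient $\bigl(3-9+\tfrac{27}{4}\bigr)\pi^{4}\tilde q_\Omega^{2}=\tfrac34\pi^{4}\tilde q_\Omega^{2}$ yielding $\frac{3\pi^{2}\tilde q_\Omega^{2}}{16c_2^{3}}$. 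Your closing caveat about verifying that the power-of-$r$-times-$\log$-polynomial structure of the remainders is preserved under powers, products, and nonvanishing division is exactly the (routine) bookkeeping the paper leaves implicit.
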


We will use this result to examine the relation between the Bergman and Szeg\H{o} metrics.  It will be helpful to introduce the quantity
\begin{equation*}
E(z,\xi)=(n+1) \left(F_{S}^{\Omega}(z,\xi)\right)^2
- n \left(F_{B}^{\Omega}(z,\xi)\right)^2.
\end{equation*}

\begin{thm}\label{thm:metric-asympt}  For $n\ge3$ the following hold.
\begin{enumerate}
\item[(a)] $E\in C^{n-2-\epsilon}\left(T\cl{\Omega}\right)$.
\item[(b)]
There are constants $0<m_\Omega<M_\Omega<\infty$ so that
\begin{equation*}
m_\Omega \, F_{S}^{\Omega}(z,\xi) \le  F_{B}^{\Omega}(z,\xi) \le M_\Omega \,F_{S}^{\Omega}(z,\xi)
\end{equation*}
 on $T\Omega$.
\item[(c)]   $E(z,\xi)=0$ when $z\in \partial\Omega$ and $\xi$ lies in the maximal complex subspace of $T_z\partial\Omega$.
\item[(d)] $E(z,\xi)\equiv0$  on all of $T(\partial\Omega)$ if and only if the boundary is locally spherical. 
\item[(e)] If $\Omega$ is simply connected then $E(z,\xi)\equiv0$ on $T\cl{\Omega}$
if and only if $\Omega$ is biholomorphic to the ball. 
\end{enumerate}
\end{thm}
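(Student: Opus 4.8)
The plan rests on the observation that $E$ is precisely the complex Hessian (Levi form) of $\log SK_\Omega(z,z)$. Indeed, since $F_S^\Omega$ and $F_B^\Omega$ are the Levi forms of $\log S_\Omega(z,z)$ and $\log K_\Omega(z,z)$,
\begin{align*}
E(z,\xi)&=\sum_{j,k}\frac{\p^2}{\p z_j\,\p\cl z_k}\Big((n+1)\log S_\Omega(z,z)-n\log K_\Omega(z,z)\Big)\,\xi_j\cl\xi_k\\
&=\sum_{j,k}\frac{\p^2\log SK_\Omega(z,z)}{\p z_j\,\p\cl z_k}\,\xi_j\cl\xi_k,
\end{align*}
by the definition \eqref{SK}. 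Thus everything reduces to the regularity and asymptotics of $\log SK_\Omega(z,z)$ furnished by Theorem \ref{thm:SK-asymp}. For (a) I would first note that $SK_\Omega(z,z)>0$ on $\Omega$ and extends continuously to $\cl\Omega$ with boundary value the positive constant $A:=\frac{(n-1)!}{c_n^{n+1}(n\pi)^n}$; hence $SK_\Omega$ is bounded away from $0$ on the compact set $\cl\Omega$, and $\log SK_\Omega(z,z)$ inherits the class $C^{n-\epsilon}(\cl\Omega)$. Two differentiations cost two orders of smoothness, so $E\in C^{n-2-\epsilon}(T\cl\Omega)$, the $\xi$-dependence being polynomial.

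For (b) I would study the ratio $\Psi=\br{F_B^\Omega(z,\xi)}^2/\br{F_S^\Omega(z,\xi)}^2=\frac{n+1}{n}-\dfrac{E(z,\xi)}{n\,\br{F_S^\Omega(z,\xi)}^2}$ on the unit-sphere bundle. The upper bound for $\Psi$ is immediate from boundedness of $E$ (part (a)) together with the lower bound $F_S^\Omega(z,\xi)\ge\gamma_\Omega|\xi|$ of \eqref{eq:S-v-E}. For the lower bound I would use that the leading term $\log S_\Omega(z,z)\sim-n\log r$, whose Levi form is bounded below by a constant times $1/r$, forces $\br{F_S^\Omega(z,\xi)}^2\to\infty$ uniformly for $|\xi|=1$ as $z\to\p\Omega$; hence $\Psi\to\frac{n+1}{n}$ at the boundary. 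Combined with continuity and positivity of $\Psi$ on compact subsets of $\Omega$, this exhibits $\Psi$ as a continuous positive function on the compact closure of the sphere bundle, yielding a strictly positive lower bound.

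Parts (c) and (d) follow from evaluating the boundary Hessian. Writing the expansion of Theorem \ref{thm:SK-asymp} as $SK_\Omega(z,z)=A+B(z)\,r^2+(\text{remainder})$, where $B|_{\p\Omega}$ is a positive multiple of $q_\Omega$, I would compute $\frac{\p^2}{\p z_j\,\p\cl z_k}\log SK_\Omega(z,z)$ and let $r\to0$. The constant $A$ contributes nothing; every term of the Hessian of $Br^2$ carries a factor of $r$ except $2B\,r_j r_{\cl k}$; and the remainder, being $r^3$ times a polynomial in $\log r$ with smooth coefficients, has Hessian of order $r$, vanishing at $r=0$. Therefore $E(z,\xi)=\frac{2B(z)}{A}\,\Abs{\textstyle\sum_j r_j\xi_j}^2$ for $z\in\p\Omega$. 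Since the complex tangential directions are exactly those with $\sum_j r_j\xi_j=0$, this proves (c); and since $\Abs{\sum_j r_j\xi_j}^2\not\equiv0$, the form vanishes identically on $T\p\Omega$ precisely when $B|_{\p\Omega}\equiv0$, i.e. $q_\Omega\equiv0$, i.e. $A_{2\bar2}^0\equiv0$, which for $n\ge3$ is equivalent to the boundary being locally spherical by the Chern--Moser normal form \cite{ChernMoser74} --- proving (d).

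Finally, for (e): if $\Omega$ is biholomorphic to $\B^n$, then invariance of $SK_\Omega$ (Theorem \ref{thm:SK-trans}, whose branch hypothesis holds because $\Omega$ is simply connected) together with the constant value $SK_{\B^n}(z,z)=\frac{1}{c_n^{n+1}}\frac{(n-1)!}{(n\pi)^n}$ forces $SK_\Omega(z,z)$ to be constant, whence $E\equiv0$. Conversely, $E\equiv0$ on $T\cl\Omega$ restricts to $E\equiv0$ on $T\p\Omega$, so by (d) the boundary is everywhere locally spherical; I would then invoke the globalization statement that a simply connected strongly pseudoconvex domain with locally spherical boundary is biholomorphic to the ball. (Equivalently, one may first show $SK_\Omega$ is constant: $E\equiv0$ makes $\log SK_\Omega(z,z)$ pluriharmonic, hence on the simply connected $\Omega$ the modulus of a zero-free holomorphic $F$; since $\abs{F}=SK_\Omega\to A$ at the boundary, the maximum principle applied to $F$ and to $1/F$ forces $\abs{F}\equiv A$, so $SK_\Omega\equiv A$.) I expect the global CR step --- upgrading pointwise local sphericity to an honest biholomorphism onto $\B^n$ --- to be the main obstacle, the earlier parts being essentially Hessian bookkeeping layered on top of Theorem \ref{thm:SK-asymp}.
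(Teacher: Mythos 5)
Your proposal is correct and follows essentially the same route as the paper: both rest on the identity $E(z,\xi)=\sum_{j,k}\frac{\p^{2}\log SK_{\Omega}(z,z)}{\p z_{j}\p\cl z_{k}}\xi_{j}\cl\xi_{k}$ combined with Theorem \ref{thm:SK-asymp}, giving (a) by smoothness, (c)--(d) by the boundary Hessian computation (your coefficient $2B/A$ is exactly the paper's $\frac{6\pi^{n}q_{\Omega}}{(n-1)(n-2)}$, and your appeal to Chern--Moser theory for ``$q_\Omega\equiv0$ iff locally spherical'' is the content of the Burns--Shnider result the paper cites), and (e) by invariance plus the ball computation for ``if'' and by (d) plus the global spherical-boundary mapping theorem (Theorem C of \cite{ChernJi}) for ``only if.'' The only deviation is in (b): the paper combines boundedness of $E$ with the coercivity bounds $F_{S}^{\Omega}\ge\gamma_\Omega|\xi|$ \emph{and} its Bergman analogue, whereas you instead use the boundary blow-up of $F_{S}^{\Omega}$ together with a compactness argument on the sphere bundle --- both are valid, the paper's being slightly more economical.
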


\begin{proof}
We start by noting that
\begin{equation} \label{eq:SK-rem}
E(z,\xi) = 
\sum_{j,k=1}^{n}\frac{\partial^{2}\left(\log SK_{\Omega}(z,z)\right)}{\partial z_{j}\,\partial\overline{z}_{k}}\,\xi_{j}\overline{\xi}_{k}.
\end{equation}
Then (a) follows from the smoothness result in Theorem \ref{thm:SK-asymp}.  Statement (b) then follows from (a) and  \eqref{eq:S-v-E} along with the Bergman version of \eqref{eq:S-v-E}.

For $z\in\partial\Omega$ we use \eqref{eq:SK-rem} and  Theorem \ref{thm:SK-asymp} to conclude that
\begin{equation*}\label{eq:E-bndry}
E(z,\xi) = \frac{6\pi^n q_{\Omega}}{(n-1) (n-2)} 
\sum_{j,k=1}^{n} r_j r_{\overline k}\xi_{j}\overline{\xi}_{k}
\end{equation*}
and thus  $E(z,\xi)=0$ when $\sum_{j=1}^{n} r_j \xi_{j}=0$,  verifying (c).  From the same computation we see that $E$ will vanish on all of $T(\partial\Omega)$ if and only if the invariant $q_\Omega$ vanishes identically, so from Corollary 2.5 in \cite{BurnsShnider77}  it follows that (d) holds.

 The ``if'' half of (e) follows from \eqref{eq:unitball} and the invariance properties.  The ``only if'' half follows from (d) along with Theorem C in \cite{ChernJi}  (see also \cite{Pincuk} and section 8 of \cite{BurnsShnider76}).
\end{proof}

For $n=2$ we have instead the following result.

\begin{thm}\label{thm:metric-asympt-n=2}  
For $n=2$ the following hold.
\begin{itemize}
\item[(a)] $E\in C^{2-\epsilon}\left(T\cl{\Omega}\right)$.
\item[(b)] 
There are constants $0<m_\Omega<M_\Omega<\infty$ so that
\begin{equation*}
m_\Omega \, F_{S}^{\Omega}(z,\xi) \le  F_{B}^{\Omega}(z,\xi) \le M_\Omega \, F_{S}^{\Omega}(z,\xi)
\end{equation*}
 on $T\Omega$.
\item[(c)] If $E\in C^{4}\left(T\cl{\Omega}\right)$ then the boundary is locally spherical. 
\item[(d)] If $\Omega$ is simply connected then $E\in C^{4}\left(T\cl{\Omega}\right)$ if and only if $\Omega$ is biholomorphic to the ball, in which case we in fact have $E(z,\xi)\equiv0$ on $T\cl{\Omega}$.
\end{itemize}
\end{thm}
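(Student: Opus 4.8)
The plan is to follow the architecture of the proof of Theorem \ref{thm:metric-asympt}, with $\tilde q_\Omega$ replacing $q_\Omega$ and with closer attention paid to the logarithmic terms. Parts (a) and (b) are formal consequences of what is already in hand. Since Theorem \ref{thm:SK-asymp} gives $SK_\Omega(z,z)\in C^{4-\epsilon}(\cl\Omega)$ and $SK_\Omega(z,z)$ stays bounded away from $0$ near $\partial\Omega$, we get $\log SK_\Omega(z,z)\in C^{4-\epsilon}(\cl\Omega)$; applying $\partial_j\partial_{\bar k}$ in \eqref{eq:SK-rem} costs two orders and yields $E\in C^{2-\epsilon}(T\cl\Omega)$, which is (a). For (b) I would repeat the $n\ge3$ argument verbatim: by (a) the function $E(z,\xi)$ is continuous, hence bounded, on $\set{|\xi|=1}\times\cl\Omega$, so $|E(z,\xi)|\le C|\xi|^2$, and feeding this into \eqref{eq:S-v-E} and its Bergman analogue bounds $F_B^\Omega/F_S^\Omega$ above and below by positive constants.

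The substance is in (c). First I would expand $\log SK_\Omega(z,z)$ from the $n=2$ line of Theorem \ref{thm:SK-asymp}. Since the lowest logarithmic term of $SK_\Omega$ is $\mu_3\,r^4\log r$ and the $\log(1+\cdot)$ expansion introduces further logarithms only at order $r^6\log r$ and higher (the square of the $r^4\log r$ term being $O(r^8\log^2 r)$), one obtains
\[
\log SK_\Omega(z,z)=\text{(smooth)}+\frac{\mu_3}{A}\,r^4\log r+\frac{3\pi^2\,\tilde q_\Omega^{2}}{16\,c_2^3\,A}\,r^6\log^2 r+O\!\left(r^6\log r\right),\qquad A=\frac{1}{4\pi^2 c_2^3}.
\]
Substituting this into \eqref{eq:SK-rem} and computing $\partial_j\partial_{\bar k}$, each term $r^a(\log r)^b$ produces a term $\sim r^{a-2}(\log r)^b\,\bigl|\sum_j r_j\xi_j\bigr|^2$ in $E$.

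The decisive observation is that $\log^2 r$ appears only through the $r^6\log^2 r$ term, so $E$ has a unique $r^4\log^2 r$ contribution, with coefficient a fixed nonzero multiple of $\tilde q_\Omega^{2}\,\bigl|\sum_j r_j\xi_j\bigr|^2$; since $r^4\log^2 r\notin C^4$ while all competing singular behaviors carry at most one power of $\log r$, the hypothesis $E\in C^4$ forces this coefficient to vanish on $\partial\Omega$. Taking $\xi$ with $\sum_j r_j\xi_j\ne0$ gives $\tilde q_\Omega\equiv0$ on $\partial\Omega$. Because $\tilde q_\Omega=-\tfrac{8}{\pi^2}A^0_{4\bar4}$ is (a constant multiple of) the single Chern--Moser/Cartan curvature invariant governing sphericity when $n=2$, its identical vanishing along the boundary means the boundary is locally spherical (cf.\ \cite{ChernMoser74, BurnsShnider77}), proving (c). For (d), the forward direction combines (c) with the global uniformization of \cite{ChernJi}: a simply connected strongly pseudoconvex domain with locally spherical boundary is biholomorphic to the ball. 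Conversely, $E$ is biholomorphically invariant by Proposition \ref{Sz-met-trans} together with the invariance of the Bergman metric (the required holomorphic branch exists because $\Omega$ is simply connected), and on $\B^2$ the relation \eqref{eq:unitball} gives $F_S^{\B^2}=\sqrt{2/3}\,F_B^{\B^2}$, so $E=3\,(F_S^{\B^2})^2-2\,(F_B^{\B^2})^2\equiv0$; transporting back shows $E\equiv0$ on any $\Omega$ biholomorphic to the ball, which in particular lies in $C^4$.

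The main obstacle is the singular-term bookkeeping in (c): one must confirm that passing from the expansion of $SK_\Omega$ to that of $E$ really isolates $\tilde q_\Omega^{2}$ in the $r^4\log^2 r$ coefficient, uncontaminated by the $\mu_3$-governed single-logarithm terms, and that $C^4$ regularity is sharp enough to detect it. The supporting input from CR geometry, that vanishing of this lone invariant characterizes local sphericity for $n=2$, must also be invoked with the correct reference, since for $n=2$ the Moser normal form behaves differently from the $n\ge3$ case treated via \cite{BurnsShnider77}.
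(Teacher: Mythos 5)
Your parts (a), (b), and (d) are handled exactly as the paper handles them, and the first half of your argument for (c) is also the paper's: the $r^6\log^2 r$ term in the expansion of $SK_\Omega(z,z)$ survives into $E$ as an $r^4\log^2 r$ singularity, so $E\in C^4\left(T\cl\Omega\right)$ forces $\tilde q_\Omega\equiv 0$. The gap is in your final step: the assertion that $\tilde q_\Omega=-\frac{8}{\pi^2}A^0_{4\bar 4}$ is ``the single Chern--Moser/Cartan curvature invariant governing sphericity when $n=2$'' is false, and with it the inference from $\tilde q_\Omega\equiv0$ to local sphericity collapses. The paper says this explicitly: by Proposition 1.9 in \cite{Graham}, the condition $\tilde q_\Omega\equiv0$ alone does \emph{not} guarantee that the boundary is spherical. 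For $n=2$ the lowest-weight Moser normal form coefficient obstructing sphericity is $A_{2\bar 4}$, not $A_{4\bar 4}$; you flagged this as ``the main obstacle'' yourself, but the resolution you propose (citing \cite{ChernMoser74, BurnsShnider77}) does not exist, because the needed implication is simply untrue.

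The paper closes this gap with a second stage that your proposal is missing entirely. Once $\tilde q_\Omega\equiv0$ is known, an argument of Burns (Theorem 3.2 in \cite{Graham}), combined with the material from \cite{HirachiKomatsuNakazawa}, yields \emph{refined} expansions in which a new logarithmic term appears at lower order with coefficient governed by a different invariant, $q^*_\Omega=\frac{8}{15\pi^2}\left|A^0_{2\bar 4}\right|^2$: concretely, $SK_\Omega(z,z)=\frac{1}{c_2^3 4\pi^2}+\mu_6 r^2-\frac{3q^*_\Omega}{c_2^3 2}r^4\log r+O(r^4)$, so that $E$ acquires a term $-72\,q^*_\Omega\,r^2\log r\sum_{j,k}r_j r_{\bar k}\xi_j\cl{\xi_k}$. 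The hypothesis $E\in C^4\left(T\cl\Omega\right)$ then forces $q^*_\Omega\equiv0$ as well, i.e.\ $A^0_{2\bar 4}\equiv 0$, and it is \emph{this} vanishing that implies the boundary is locally spherical. Without this second application of the $C^4$ hypothesis to a refined expansion, your argument proves only $\tilde q_\Omega\equiv0$, which is strictly weaker than the conclusion of (c).
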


\begin{proof} We need to explain part (c), everything else falling into place as before.

If  $E\in C^{4}\left(T\cl{\Omega}\right)$ then the $r^6\log^2 r$ term from the expansion in Theorem \ref{thm:SK-asymp} must disappear, forcing $\tilde q_\Omega\equiv 0$.
Using an argument of Burns appearing as Theorem 3.2 in Graham's paper \cite{Graham} along with the previously cited material from \cite{HirachiKomatsuNakazawa} we obtain  revised expansions 
\begin{align*}
K_{\Omega}(z,z) & = \frac{2}{\pi^2 r^3} + \mu_4 + 9\, q^*_\Omega r \log r + O(r)\\
S_{\Omega}(z,z) & = \frac{1}{c_2\pi^2 r^2} + \mu_5 + \frac{1}{c_2}q^*_\Omega r^2 \log r + O(r^2) \\
SK_{\Omega}(z,z) & =\frac{1}{c_2^3 4\pi^2} + \mu_6\,r^2 - \frac{3  q^*_\Omega}{c_2^3 2} \, r^4 \log r  + O(r^4) \\
E(z,\xi) &=  \mu_7 - 72\, q^*_\Omega \, r^2 \log r \sum_{j,k=1}^{n} r_j r_{\overline k}\xi_{j}\overline{\xi}_{k} + O(r^2),
\end{align*} 
where $q^*_\Omega = \frac{8}{15\pi^2} |A_{2\bar 4}^0|^2$ and $\mu_4, \mu_5, \mu_6\in C^\infty(\cl\W), \,\mu_7\in C^\infty(T\cl\W)$.  Our smoothness assumption on $E$ now forces $q^*_\Omega\equiv0$
and this in turn implies that the boundary is spherical.  (We note that by Proposition 1.9 in \cite{Graham}, the condition $\tilde q_\Omega\equiv0$ alone does not guarantee that the boundary is spherical.) \end{proof}

For the sake of completeness we also record the corresponding results in one dimension.

\begin{thm}\label{thm:metric-asympt-n=1}
 For $n=1$ the following hold.
\begin{enumerate}
\item[(a)] $E\in C^{\infty}\left(T\cl{\Omega}\right)$.
\item[(b)]
There are constants $0<m_\Omega<M_\Omega<\infty$ so that
\begin{equation*}
m_\Omega \, F_{S}^{\Omega}(z,\xi) \le  F_{B}^{\Omega}(z,\xi) \le M_\Omega \,F_{S}^{\Omega}(z,\xi)
\end{equation*}
 on $T\Omega$.
\item[(c)]  If $\Omega$ is simply connected then $E(z,\xi)\equiv0$.
\end{enumerate}
\end{thm}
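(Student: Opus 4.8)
The plan is to reduce all four statements to the boundary behaviour of the single diagonal function $SK_\Omega(z,z)=S_\Omega(z,z)^2/K_\Omega(z,z)$, exactly as in Theorems \ref{thm:metric-asympt} and \ref{thm:metric-asympt-n=2}. Indeed \eqref{eq:SK-rem} reads, for $n=1$,
\begin{equation*}
E(z,\xi)=\frac{\p^2\big(\log SK_\Omega(z,z)\big)}{\p z\,\p\cl z}\,|\xi|^2,
\end{equation*}
so everything turns on the regularity and (for (c)) the constancy of $\log SK_\Omega(z,z)$. Note that for $n=1$ the standing assumption merely says that $\Omega\subset\C$ is a bounded domain with $C^\infty$ boundary, hence finitely connected, and strong pseudoconvexity is vacuous. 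I would prove (c) first, then (a), and finally deduce (b) formally.

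For (c), let $\Omega$ be simply connected and let $f\colon\Omega\to\D$ be a Riemann map. Then $f'$ is zero-free, and simple connectivity provides a holomorphic branch of $(f')^{1/2}=(\det J_\C f)^{1/2}$, so $f$ meets the hypotheses of Proposition \ref{prop:Sz-ker-trans}. Theorem \ref{thm:SK-trans} then gives $SK_\Omega(z,z)=SK_\D(f(z),f(z))$, and the diagonal value of $SK$ on the ball computed in the remark following Theorem \ref{thm:SK-trans} equals $\tfrac{1}{c_1^{2}\pi}$ when $n=1$. Thus $\log SK_\Omega(z,z)$ is constant, its complex Hessian vanishes, and $E\equiv0$.

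For (a) I would use the classical one-variable identity relating the two kernels. In dimension one a direct computation gives $d\sigma_F=\tfrac{c_1}{2}\,d\sigma_E$, so the Fefferman-normalised Szeg\H{o} kernel $S_\Omega$ of this paper is $\tfrac{2}{c_1}$ times the arc-length Szeg\H{o} kernel; the classical identity then takes the form
\begin{equation*}
K_\Omega(z,w)=\pi c_1^{2}\,S_\Omega(z,w)^2+G(z,w),
\end{equation*}
where $G(z,w)=\sum_{j,k}\lambda_{jk}\,F_j'(z)\,\cl{F_k'(w)}$ is a finite sum built from the derivatives of the harmonic-measure functions of the bounded complementary components. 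Each $F_j'$ is holomorphic on $\Omega$ and, by elliptic boundary regularity, extends to $C^\infty(\cl\Omega)$, so $G(z,z)\in C^\infty(\cl\Omega)$; dividing by $K_\Omega(z,z)$ gives
\begin{equation*}
SK_\Omega(z,z)=\frac{1}{\pi c_1^{2}}\Big(1-\frac{G(z,z)}{K_\Omega(z,z)}\Big).
\end{equation*}
It therefore suffices to know that $1/K_\Omega(z,z)\in C^\infty(\cl\Omega)$, and this is the one genuine analytic input: in complex dimension one the diagonal Bergman kernel has \emph{no} logarithmic term (a real curve in $\C$ is totally real and carries no local CR invariant to produce one), so $r^{2}K_\Omega(z,z)$ extends to a strictly positive function $\phi\in C^\infty(\cl\Omega)$ and $1/K_\Omega(z,z)=r^{2}/\phi\in C^\infty(\cl\Omega)$. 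Hence $G/K_\Omega\in C^\infty(\cl\Omega)$, $SK_\Omega(z,z)\in C^\infty(\cl\Omega)$ is strictly positive, $\log SK_\Omega(z,z)\in C^\infty(\cl\Omega)$, and $E\in C^\infty(T\cl\Omega)$. (This also reproves (c): when $\Omega$ is simply connected the sum $G$ is empty and $SK_\Omega\equiv\tfrac{1}{c_1^{2}\pi}$.)

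Finally (b) is formal, as in the cases $n\ge2$. Writing $E(z,\xi)=\tilde E(z)\,|\xi|^2$ with $\tilde E\in C^\infty(\cl\Omega)$ bounded by (a), and using $2\big(F_S^\Omega\big)^2=\big(F_B^\Omega\big)^2+E$, the lower bound \eqref{eq:S-v-E} and its Bergman analogue let one absorb the bounded error $\tilde E\,|\xi|^2$ into either $\big(F_S^\Omega\big)^2$ or $\big(F_B^\Omega\big)^2$, giving the two-sided comparison with uniform constants $0<m_\Omega<M_\Omega<\infty$. The main obstacle is entirely within (a): one must correctly quote the classical Bergman--Szeg\H{o} identity and, above all, justify the absence of the logarithmic term in the one-dimensional Bergman expansion, which is exactly what upgrades the finite boundary regularity of the $n\ge2$ theorems to full $C^\infty$ here.
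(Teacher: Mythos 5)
Your proposal is correct and essentially reproduces the paper's proof: for (a) the paper likewise invokes the classical one-dimensional Bergman--Szeg\H{o} identity (it is Theorem 23.2 in Bell's book, which the paper cites) together with the well-known smoothness and nonvanishing of $r^{2}K_{\Omega}(z,z)$ (and $rS_{\Omega}(z,z)$) on $\cl{\Omega}$; (b) is the same absorption argument via \eqref{eq:S-v-E} used for Theorem \ref{thm:metric-asympt}; and (c) is the Riemann mapping theorem plus invariance, your route through the constancy of $SK_{\D}$ being equivalent to the paper's use of \eqref{eq:unitball}. The only caveat is that your "totally real boundary" heuristic for the absence of a logarithmic term is not a proof, but since that boundary regularity of the one-variable Bergman kernel is classical, quoting it as known (exactly as the paper does) is all that is required.
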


\begin{proof}
(a) follows from Theorem 23.2 in \cite{Bell92} and
the well-known fact that  $r S_{\Omega}(z,z)$ and $r^{2}K_{\Omega}(z,z)$ are in $C^{\infty}\left(\cl{\Omega}\right)$ and are nowhere vanishing on $\cl{\Omega}$. 
Statement (b) follows from (a) as in the proof of Theorem \ref{thm:metric-asympt} above.

(c) follows from \eqref{eq:unitball}, invariance properties and the Riemann mapping theorem.
\end{proof}

\section{Comparison with the Carathéodory metric}

In this section we discuss the comparison between the Carathéodory
and Szeg\H{o} metrics and show that the Szeg\H{o} metric is always
greater than or equal to the Carathéodory metric. The proof follows
the same method that was used to show that the Bergman metric is greater
than or equal to the Carathéodory metric in \cite{Hahn}.

We define the Carathéodory metric on a domain $\Omega\subset\mathbb{C}^{n}$
at $p\in\Omega$ in the direction $\xi\in\mathbb{C}^{n}$, $F_{C}^{\Omega}(p,\xi)$,
as \[
F_{C}^{\Omega}(p,\xi)=\sup\left\{ \left(\sum_{j=1}^{n}\left|\frac{\partial\phi\left(p\right)}{\partial z_{j}}\xi_{j}\right|^{2}\right)^{1/2}:\phi\in\mathcal{O}\left(\Omega,\Delta\right),\,\phi\left(p\right)=0\right\} ,\]
where $\mathcal{O}\left(\Omega,\Delta\right)$ denotes the set of
holomorphic mappings from $\Omega$ to $\Delta$, the unit disc in
$\mathbb{C}$. 
\begin{thm}\label{thm:Sz-v-Cara}
The Szeg\H{o} metric is greater than or equal to the Carathéodory
metric. \end{thm}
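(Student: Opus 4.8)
The plan is to adapt the classical argument (as used by Hahn \cite{Hahn} for the Bergman case) comparing an extremal-problem characterization of the Szeg\H{o} metric against the definition of the Carath\'eodory metric. The key is to recognize that $\br{F_{S}^{\Omega}(p,\xi)}^2$ admits a variational description in terms of the Hardy space $H^2(\partial\Omega)$: for $p\in\Omega$ and $\xi\in\C^n$, one has
\begin{equation*}
\br{F_{S}^{\Omega}(p,\xi)}^2
= \frac{1}{S_{\Omega}(p,p)}
\sup\set{\Abs{\sum_{j=1}^{n}\frac{\p g(p)}{\p z_j}\xi_j}^2
: g\in H^2(\partial\Omega),\ \norm{g}_{H^2}=1,\ g(p)=0}.
\end{equation*}
First I would establish this formula. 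It follows from the standard fact that for a metric defined via $\p\bar\p\log S_{\Omega}(z,z)$ one can write
\begin{equation*}
\sum_{j,k}\frac{\p^2\log S_{\Omega}(z,z)}{\p z_j\,\p\cl{z_k}}\xi_j\cl{\xi_k}
= \frac{1}{S_{\Omega}(p,p)^2}
\br{S_{\Omega}(p,p)\,T(p,\xi) - \Abs{\sum_j \frac{\p S_{\Omega}(z,w)}{\p z_j}\Big|_{z=w=p}\xi_j}^2},
\end{equation*}
where $T$ is the second-order term, and then to reinterpret this using the orthonormal expansion $S_{\Omega}(z,w)=\sum_\alpha \phi_\alpha(z)\cl{\phi_\alpha(w)}$ to arrive at the normalized extremal form above, exactly mirroring the Bergman development.

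Next I would bound the Szeg\H{o} metric below by the Carath\'eodory metric. Given a competitor $\phi\in\mathcal O(\Omega,\Delta)$ with $\phi(p)=0$ for the Carath\'eodory supremum, the idea is to manufacture from $\phi$ an admissible competitor $g$ for the Szeg\H{o} extremal problem whose boundary derivative in the direction $\xi$ matches $\phi$'s, and whose $H^2$ norm is controlled. Since $\phi$ maps into the unit disc, powers $\phi^m$ lie in $H^\infty(\Omega)\subset H^2(\partial\Omega)$ and vanish to high order at $p$; the standard trick is to take $g$ proportional to $\phi$ itself (which vanishes at $p$) and compute $\norm{\phi}_{H^2}^2 = \int_{\p\Omega}\abs{\phi}^2\,d\sigma_F \le \sigma_F(\p\Omega)$, since $\abs\phi<1$. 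Comparing the resulting lower bound for $\br{F_S^\Omega(p,\xi)}^2$ against $\br{F_C^\Omega(p,\xi)}^2 = \sup_\phi\abs{\sum_j \p_j\phi(p)\xi_j}^2$, the factor $S_{\Omega}(p,p)\,\sigma_F(\p\Omega)$ must be shown to be $\ge 1$; this is precisely the reproducing identity evaluated against the constant function, giving $S_\Omega(p,p)=\int_{\p\Omega}\abs{S_\Omega(p,\z)}^2 d\sigma_F(\z)$ and a Cauchy--Schwarz estimate yielding the needed inequality.

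The main obstacle I anticipate is the same subtlety that arises in the Bergman case: one must pass from the \emph{interior} derivative data of the Carath\'eodory competitor to an \emph{$H^2$-norm} estimate in the Szeg\H{o} extremal problem, and the normalization constants must conspire to give the clean inequality $F_S^\Omega\ge F_C^\Omega$ with no stray dimensional factor. I would need to verify carefully that the reproducing property \eqref{eq:szegokernel} together with the positivity of $S_\Omega(z,z)$ (from the orthonormal expansion noted after the definition) delivers exactly the constant that cancels, rather than merely a comparability with an unspecified constant. A secondary technical point is ensuring that the Carath\'eodory competitor $\phi$, a priori only holomorphic on $\Omega$, can be taken in (or approximated within) $A(\Omega)=\mathcal O(\Omega)\cap C(\cl\Omega)$ so that it genuinely lies in $H^2(\partial\Omega)$; this is handled by composing with dilations or invoking boundedness of $\phi$, since bounded holomorphic functions on a smoothly bounded strongly pseudoconvex domain have boundary values in $L^2(\p\Omega)$ and lie in $H^2$.
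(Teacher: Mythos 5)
Your first step (the variational characterization of $\br{F_{S}^{\W}(p,\xi)}^2$ as a normalized extremal problem over $g\in H^{2}(\p\W)$ with $g(p)=0$, $\norm{g}=1$) is exactly the formula the paper uses, so that part is fine. The gap is in the second step: taking $g$ proportional to $\phi$ itself does not work. With $g=\phi/\norm{\phi}_{H^2}$ and $\norm{\phi}_{H^2}^2\le\sigma_F(\p\W)$ you only get
\begin{equation*}
\br{F_{S}^{\W}(p,\xi)}^{2}\;\ge\;\frac{\Abs{\sum_j \p_j\phi(p)\,\xi_j}^{2}}{S_{\W}(p,p)\,\sigma_{F}(\p\W)},
\end{equation*}
so to conclude $F_S\ge F_C$ you would need $S_{\W}(p,p)\,\sigma_{F}(\p\W)\le 1$. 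But the Cauchy--Schwarz argument you invoke proves precisely the \emph{opposite} inequality: reproducing the constant function gives $1=\int_{\p\W}S_{\W}(p,\z)\,d\sigma_{F}(\z)\le\sqrt{S_{\W}(p,p)}\,\sqrt{\sigma_{F}(\p\W)}$, i.e.\ $S_{\W}(p,p)\,\sigma_{F}(\p\W)\ge 1$, with equality essentially never. So your bound is strictly weaker than the theorem, and the defect is not cosmetic: the constant $1/(S_{\W}(p,p)\,\sigma_{F}(\p\W))$ tends to $0$ as $p\to\p\W$ because $S_{\W}(p,p)\to\infty$.

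The missing idea (which is Hahn's trick, and what the paper does) is to use the kernel itself as the multiplier rather than a constant normalization: set
\begin{equation*}
g(z)=\frac{S_{\W}(z,p)}{\sqrt{S_{\W}(p,p)}}\,\phi(z).
\end{equation*}
Since $\norm{S_{\W}(\cdot,p)}_{L^{2}(\p\W)}^{2}=S_{\W}(p,p)$ and $\abs{\phi}<1$, this $g$ has $\norm{g}_{L^{2}(\p\W)}\le 1$ and $g(p)=0$; moreover, because $\phi(p)=0$, the product rule kills the term involving derivatives of $S_{\W}(\cdot,p)$ and gives $\sum_j \p_j g(p)\,\xi_j=\sqrt{S_{\W}(p,p)}\,\sum_j\p_j\phi(p)\,\xi_j$. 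The factor $S_{\W}(p,p)$ produced by the derivative then cancels the denominator in the extremal formula exactly, yielding $\br{F_{S}^{\W}(p,\xi)}^{2}\ge\Abs{\sum_j\p_j\phi(p)\,\xi_j}^{2}$ with no stray constant, and taking the supremum over $\phi$ finishes the proof. (This construction also disposes of your secondary worry: $g$ is a bounded holomorphic multiple of the $H^{2}$ function $S_{\W}(\cdot,p)$, hence lies in $H^{2}(\p\W)$ without any approximation of $\phi$ by functions continuous up to the boundary.)
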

\begin{proof}
One can show that \begin{equation}
\left(F_{S}^{\W}(p,\xi)\right)^{2}=\frac{\sup\left\{ |\xi g(p)|^{2}:g\in H^{2}(\p\W),\, g(p)=0,\,\norm g_{L^{2}(\p\W)}=1\right\} }{S(p,p)}\label{eq:metric}\end{equation}
using the Hilbert space method. Refer to Theorem 6.2.5 in \cite{JarnickiPflug}
for further details. 

Let  $p\in\Omega$. We have\[
\left\Vert S(\cdot,p)\right\Vert _{L^{2}\left(\partial\Omega\right)}^{2}=\norm{S(p,\cdot)}_{L^{2}(\partial\W)}^{2}=\int_{\partial\W}\cl{S(p,\z)}S(p,\z)\, d\sigma_{F}(\z)=\cl{S(p,p)}=S(p,p).\]
Let $\phi:\Omega\longrightarrow\Delta$ be a holomorphic function with
$\phi(p)=0$. Define a holomorphic function $g:\Omega\longrightarrow\Delta$
as follows: \[
g(z)=\frac{S(z,p)}{\sqrt{S(p,p)}}\phi(z).\]
Then $\norm g_{L^{2}(\partial\W)}\le1$ and $g(p)=0$. Hence from
\eqref{eq:metric} we get\begin{equation}
\left(F_{S}^{\W}(p,\xi)\right)^{2}\ge\frac{\left|\xi g(p)\right|^{2}}{S(p,p)}=\frac{S(p,p)^{2}|\xi\phi(p)|^{2}}{S(p,p)^{2}}=|\xi\phi(p)|^{2}.\label{eq:612-1}\end{equation}
Therefore we get $F_{S}^{\W}(p,\xi)\ge F_{C}^{\W}(p,\xi)$. \end{proof}

\begin{rem}
This argument works on any smoothly bounded pseudoconvex domain where the Szeg\H{o} metric is defined.
\end{rem}

\begin{rem}
The equation \eqref{eq:unitball} shows that the inequality $F_{S}^{\Omega}(p,\xi)\ge F_{C}^{\Omega}(p,\xi)$
is sharp  even in some cases where $F_{C}^{\Omega}(p,\xi)>0$, whereas
we have $F_{B}^{\Omega}(p,\xi)\gneq F_{C}^{\Omega}(p,\xi)$
if $F_{C}^{\Omega}(p,\xi)>0$ \cite{JarnickiPflug}. 
\end{rem}

\section{Comparison with the Bergman metric}
In this section we carry out some computations on annuli to show that the constants $m_\Omega$  and  $M_\Omega$ in Theorem \ref{thm:metric-asympt-n=1}  must depend on $\Omega$.

\begin{thm}
There are no constants  $0<m<M<\infty$ independent of $\Omega$ with the property that
\begin{equation*}
m \, F_{S}^{\Omega}(z,\xi) \le  F_{B}^{\Omega}(z,\xi) \le M \, F_{S}^{\Omega}(z,\xi)
\end{equation*}
on $T\Omega$.
\end{thm}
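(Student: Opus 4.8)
The plan is to defeat uniformity on the annuli $A_q=\set{q<\abs z<1}\subset\C$ $(0<q<1)$, which for $n=1$ are smoothly bounded—hence admissible under the standing assumption—but not simply connected, so that Theorem \ref{thm:metric-asympt-n=1}(b) supplies domain-dependent constants $m_{A_q},M_{A_q}$ whose uniformity we will contradict. On $A_q$ the Laurent monomials $\set{z^m}_{m\in\Z}$ form a complete orthogonal system for both the Bergman and the Hardy space; since in one complex dimension the Fefferman measure is a constant multiple of Euclidean arc length and the Szeg\H o metric is insensitive to that constant (Remark \ref{rem:cn-in-metric}), a direct integration gives
\[\norm{z^m}_{L^2(A_q)}^2=\begin{cases}\frac{\pi}{m+1}\br{1-q^{2m+2}}, & m\ne-1,\\[2pt] -2\pi\log q, & m=-1,\end{cases}\qquad \norm{z^m}_{L^2(\p A_q)}^2=2\pi\br{1+q^{2m+1}}.\]
Both diagonal kernels are then radial: $K_{A_q}(z,z)=\sum_m\abs z^{2m}/\norm{z^m}_{L^2(A_q)}^2$, and similarly for $S_{A_q}$.

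My main simplification is a variance reduction. Writing $s=\abs z^2$ and any radial diagonal kernel as $\sum_m c_m s^m$, the exponential-family identity $s\,\p_s\mathbb E[N]=\operatorname{Var}(N)$ yields, for each of the two kernels,
\[\br{F(z,1)}^2=\frac{\operatorname{Var}(N)}{s},\]
where $F$ is the corresponding metric ($F_B^{A_q}$ or $F_S^{A_q}$) and $N$ is the $\Z$-valued random variable with $\mathrm{Prob}(N=m)\propto c_m s^m$. Consequently $\br{F_B^{A_q}/F_S^{A_q}}^2$ is just the ratio of the variances of the two explicit distributions determined by the Bergman and Szeg\H o weights above. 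As a check, on the disc this machinery returns $F_B/F_S\equiv\sqrt2$, matching \eqref{eq:unitball}.

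I would then evaluate at the invariant middle circle $\abs z=\sqrt q$, i.e.\ $s=q$, and let $q\to0^+$. At $s=q$ the Bergman weights obey the exact reflection symmetry $c_m q^m=c_{-2-m}q^{-2-m}$ and the Szeg\H o weights one about $-\tfrac12$, so the two means are pinned at $-1$ and $-\tfrac12$. As $q\to0$ the Bergman law develops a dominant spike at $m=-1$ (contributing nothing to the variance about $-1$) flanked by masses at $m=0,-2$ of relative size $\asymp -q\log q$, giving $\operatorname{Var}_B\sim -4q\log q$ and $\br{F_B^{A_q}}^2\sim-4\log q$, while the Szeg\H o law concentrates on $\set{0,-1}$ with $\operatorname{Var}_S\to\tfrac14$ and $\br{F_S^{A_q}}^2\sim\tfrac1{4q}$. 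Hence
\[\frac{F_B^{A_q}(z,1)}{F_S^{A_q}(z,1)}\sim 4\sqrt{-q\log q}\To0\qquad(q\to0^+,\ \abs z=\sqrt q),\]
so no positive $m$ can satisfy $m\,F_S\le F_B$ on every admissible domain, and therefore no pair $0<m<M<\infty$ can exist.

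The one delicate step is the uniform analysis of these Laurent series as $q\to0$ with $s=q$: one must show that $\operatorname{Var}_B$ is controlled by the $m\in\set{0,-2}$ terms measured against the $m=-1$ spike (and $\operatorname{Var}_S$ by $\set{0,-1}$), with all remaining terms negligible uniformly in $q$. The reflection symmetries are precisely what make this manageable, since they fix the means exactly and reduce the estimate to bounding the rapidly decaying second-moment tails.
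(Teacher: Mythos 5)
Your proof is correct, and it lands on the same basic construction as the paper: the paper's proof (via Proposition \ref{pro:Bergman-Szego-ring}) also works on the annuli $\Omega_r=\{r<|z|<1\}$, uses the same Laurent-monomial orthogonal bases with exactly the norms you computed, and evaluates at the invariant circle $|z|=\sqrt r$, obtaining $\bigl(F_B^{\Omega_r}(\sqrt r,1)\bigr)^2\sim 4\log(1/r)$ and $\bigl(F_S^{\Omega_r}(\sqrt r,1)\bigr)^2\sim\frac{1}{4r}$ — precisely your limits. What you do differently is the bookkeeping: the paper grinds out the asymptotics of $\alpha_0\alpha_2-|\alpha_1|^2$ and $\beta_0\beta_2-|\beta_1|^2$ term by term, whereas your exponential-family identity $\bigl(F(z,1)\bigr)^2=\operatorname{Var}(N)/s$ together with the reflection symmetries $m\mapsto -2-m$ (Bergman) and $m\mapsto -1-m$ (Szeg\H{o}) at $s=q$ pins the means at $-1$ and $-\tfrac12$ exactly and reduces everything to a two-term variance estimate; this is a genuine simplification, and your deferred tail estimate is routine (geometric-series comparison, the same level of rigor the paper itself invokes). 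The one substantive difference in coverage: you only exhibit $F_B/F_S\to 0$, which does suffice for the theorem as literally stated (any pair $(m,M)$ is already defeated by the failure of the lower bound), but the paper additionally evaluates at $|z|=r^{1/5}$, where $F_S^{\Omega_r}(r^{1/5},1)\to 1$ stays bounded while $F_B^{\Omega_r}(r^{1/5},1)\sim\sqrt{2\log(1/r)}\to\infty$, so that the ratio is also unbounded above. That second point is what backs the stronger claim in the paper's introduction that there is \emph{neither} a universal upper bound \emph{nor} a positive lower bound for the ratio; if you want your argument to recover that full statement, you would need to run your variance machinery at a second, asymmetric point (your method handles it: at $s=q^{2/5}$ the Bergman law still has its logarithmic spike at $m=-1$ but the Szeg\H{o} law degenerates to a point mass at $m=0$, killing $\operatorname{Var}_S/s$).
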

\begin{proof}
The results of Proposition \ref{pro:Bergman-Szego-ring} below show that
\begin{align*}
F_{S}^{\Omega_{r}}(\sqrt{r},1) / F_{B}^{\Omega_{r}}(\sqrt{r},1) &\to\infty \\
\intertext{and}
F_{S}^{\Omega_{r}}(\sqrt[5]{r},1) / F_{B}^{\Omega_{r}}(\sqrt[5]{r},1) &\to0\end{align*}
as $r\to0$, where $\Omega_{r}=\left\{ z\in\mathbb{C}:r<\left|z\right|<1\right\} $. \end{proof}

\begin{prop}
\label{pro:Bergman-Szego-ring} Let $\Omega_{r}=\left\{ r<\left|z\right|<1\right\} \subset\mathbb{C}$
and $r\in\left(0,1\right)$. We have
\begin{align*}
\lim_{r\to0}\frac{F_{B}^{\Omega_{r}}(\sqrt{r},1)}{\sqrt{\log(1/r)}}  =2&\quad\mbox{and}\quad\lim_{r\to0}\sqrt{r}\cdot F_{S}^{\Omega_{r}}(\sqrt{r},1)=\frac{1}{2}.\\
\intertext{Also,}
\lim_{r\to0}\frac{F_{B}^{\Omega_{r}}(\sqrt[5]{r},1)}{\sqrt{\log(1/r)}}=\sqrt{2} & \quad\mbox{and}\quad\lim_{r\to0}F_{S}^{\Omega_{r}}(\sqrt[5]{r},1)=1.\end{align*}
\end{prop}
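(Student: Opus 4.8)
The plan is to compute both kernels on $\Omega_r$ explicitly from the monomial systems and then read off all four limits through one unifying device. Since $n=1$, the determinant defining the Fefferman measure reduces to $-\rho_1\rho_{\bar1}=-|\rho_1|^2$ while $\|d\rho\|=2|\rho_1|$, so $d\sigma_F=\frac{c_1}{2}\,d\sigma_E$ is a constant multiple of arc length; by Remark \ref{rem:cn-in-metric} (constant rescalings of the measure do not affect $\partial\overline\partial\log S$) we may therefore compute $F_S^{\Omega_r}$ using arc length. The powers $z^n$, $n\in\Z$, are orthogonal in both $L^2(\partial\Omega_r)$ and $L^2(\Omega_r)$, with $\|z^n\|^2_{L^2(\partial\Omega_r)}=2\pi(1+r^{2n+1})$ and $\|z^n\|^2_{L^2(\Omega_r)}=\pi\frac{1-r^{2n+2}}{n+1}$ for $n\ne-1$, $\|z^{-1}\|^2_{L^2(\Omega_r)}=2\pi\log(1/r)$. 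Consequently $S_{\Omega_r}(z,z)$ and $K_{\Omega_r}(z,z)$ are, up to an irrelevant constant, $\sum_n c_n|z|^{2n}$ with $c_n=\frac{1}{1+r^{2n+1}}$ in the Szeg\H{o} case and $c_n=\|z^n\|_{L^2(\Omega_r)}^{-2}>0$ in the Bergman case.

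The key reduction is the following. Writing $u=\log|z|^2$ and using rotational invariance, a direct computation shows that for any kernel of the form $\sum_n c_n e^{nu}$ with $c_n>0$,
\[
|z|^2\bigl(F^{\Omega_r}(z,1)\bigr)^2=\frac{d^2}{du^2}\log\!\Bigl(\sum_n c_n e^{nu}\Bigr)=\mathrm{Var}_q(n),
\]
where $q_n\propto c_n|z|^{2n}$ defines a probability distribution on $\Z$. Thus each of the four assertions becomes the determination of the leading behaviour, as $r\to0$, of the variance of the index $n$ under an explicit distribution, evaluated at $|z|^2=r$ (the point $\sqrt r$) or $|z|^2=r^{2/5}$ (the point $\sqrt[5]{r}$).

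Carrying this out: for the Szeg\H{o} kernel at $|z|^2=r$ the weights $\frac{r^n}{1+r^{2n+1}}$ are symmetric under $n\mapsto-1-n$ and concentrate equally on $n=0,-1$, giving $\mathrm{Var}_q(n)\to\frac14$ and hence $\sqrt r\,F_S^{\Omega_r}(\sqrt r,1)\to\frac12$; at $|z|^2=r^{2/5}$ the weights concentrate on $n=0$ with leading correction at $n=1$ of size $r^{2/5}$, so $\mathrm{Var}_q(n)\sim r^{2/5}$ and $F_S^{\Omega_r}(\sqrt[5]{r},1)\to1$. For the Bergman kernel the decisive feature is that $c_{-1}=\frac{1}{2\pi\log(1/r)}$ is anomalously large, so the $n=-1$ weight dominates the normalization; because $(n+1)^2$ vanishes at $n=-1$, the variance is supplied by $n=0$ (and, when present, its partner $n=-2$), which is the source of the $\log(1/r)$ factors. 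At $|z|^2=r$ the symmetry $n\mapsto-2-n$ pins the mean at $-1$ and yields $\mathrm{Var}_q(n)\sim 4r\log(1/r)$, so $F_B^{\Omega_r}(\sqrt r,1)/\sqrt{\log(1/r)}\to2$; at $|z|^2=r^{2/5}$ one finds $\mathrm{Var}_q(n)\sim 2r^{2/5}\log(1/r)$ and hence $F_B^{\Omega_r}(\sqrt[5]{r},1)/\sqrt{\log(1/r)}\to\sqrt2$.

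The \emph{main obstacle} is the Bergman computation at $|z|^2=r^{2/5}$, where no symmetry fixes the mean: here $\mathrm{Var}_q(n)=\langle n^2\rangle-\langle n\rangle^2$ must be handled as a genuine difference, and one must expand the first and second moments about $-1$ separately to confirm that the squared deviation of the mean from $-1$ is of order $r^{4/5}\log^2(1/r)$, strictly smaller than the order $r^{2/5}\log(1/r)$ contributed by the second moment, so that no cancellation disturbs the leading term. The remaining work, routine but necessary, consists of the uniform tail estimates showing that the omitted terms of each series contribute at strictly lower order than those retained.
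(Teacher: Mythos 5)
Your proposal is correct, and it reaches all four limits by a genuinely different organization of the computation than the paper's. Both arguments share the same foundation: the reduction $d\sigma_F=\frac{c_1}{2}\,ds$ with $c_1=2$ justified by Remark \ref{rem:cn-in-metric}, and the monomial orthonormal bases with exactly the norms you computed. From there the paper proceeds mechanically: it writes the kernel $\alpha_0$ (resp.\ $\beta_0$) and its derivatives $\alpha_1=\partial_z S_r(z,z)$, $\alpha_2=\partial_z\partial_{\bar z}S_r(z,z)$ as explicit series, expands each of the six quantities $\alpha_j(r^q,r)$, $\beta_j(r^q,r)$ (for $q=\tfrac12,\tfrac15$) to two or three terms by comparison with geometric series, and then assembles $\bigl(F^{\Omega_r}\bigr)^2=\bigl(\alpha_0\alpha_2-|\alpha_1|^2\bigr)/\alpha_0^2$. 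Your substitution $u=\log|z|^2$, which identifies $|z|^2\bigl(F^{\Omega_r}(z,1)\bigr)^2$ with the variance of the index $n$ under the weights $q_n\propto c_n|z|^{2n}$, is algebraically the same quantity, but it buys real savings: the reflection symmetries $n\mapsto-1-n$ (Szeg\H{o}) and $n\mapsto-2-n$ (Bergman), which hold exactly at $|z|^2=r$ --- they reflect the inversion $z\mapsto r/z$ of the annulus --- pin the mean and reduce two of the four cases to a single second-moment estimate, eliminating the need to expand the first-derivative series there; and the probabilistic picture makes transparent where the $\log(1/r)$ factors come from (the anomalous coefficient $b_{-1}$ dominates the normalization while contributing nothing to the second moment about $-1$). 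You also correctly isolated the one case with a genuine cancellation issue, the Bergman metric at $\sqrt[5]{r}$, where no symmetry is available and one must check that $(\langle n\rangle+1)^2\sim 4r^{4/5}\log^2(1/r)=o\bigl(r^{2/5}\log(1/r)\bigr)$; your stated orders there are right. The tail estimates you defer as routine are exactly the ones the paper also defers (``comparison test with the geometric series''), so your level of detail matches the original.
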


\begin{proof}  On the boundary of a planar domain, the Fefferman measure is $\frac{c_1}{2}\,ds$, where $ds$ denotes the element of arclength. In view of Remark \ref{rem:cn-in-metric}, we may set $c_1=2$ so that 
$d\sigma_{F}=ds$. 

The Szeg\H{o} and Bergman spaces
of $\Omega_{r}$ admit  orthonormal bases
$\left\{ a_{n}(r)z^{n}\right\} _{n\in\mathbb{Z}}$
and $\left\{ b_{n}(r)z^{n}\right\} _{n\in\mathbb{Z}}$ with $ a_{n}(r)$ and $b_{n}(r)\ge0$; thus $B_{r}(z,\zeta)=\sum\limits_{n\in\mathbb{Z}}\left(b_{n}(r)\right)^{2}z^{n}\overline{\zeta^{n}}$
and $S_{r}(z,\zeta)=\sum\limits_{n\in\mathbb{Z}}\left(a_{n}(r)\right)^{2}z^{n}\overline{\zeta^{n}}.$

One can calculate $a_{n}(r)$ and $b_{n}(r)$
as follows: 
\begin{align*}
\int_{\partial\Omega_{r}}|a_{n}(r)z^{n}|^{2}\,ds 
& =\int_{|z|=r}r^{2n}|a_{n}(r)|^{2}\,ds+\int_{|z|=1}|a_{n}(r)|^{2}\,ds\\
 & =|a_{n}(r)|^{2}\,2\pi(r^{2n+1}+1)=1,\end{align*}
hence
\[
|a_{n}(r)|^{2}=\frac{1}{2\pi(1+r^{2n+1})},\quad n\in\mathbb{Z}.\]
Also we have\begin{align*}
\int_{\Omega_{r}}|b_{n}(r)z^{n}|^{2}\,dA & =\int_{0}^{2\pi}\int_{r}^{1}|b_{n}(r)|^{2}t^{2n}t\, dt\,d\theta\\
 & =|b_{n}(r)|^{2}2\pi\frac{1}{2n+2}(1-r^{2n+2})=1,\quad n\neq-1,\\
\int_{\Omega_{r}}|b_{-1}(r)z^{-1}|^{2}\,dA & =\int_{0}^{2\pi}\int_{r}^{1}|b_{-1}(r)|^{2}\frac{1}{t}\, dt\,d\theta=|b_{-1}(r)|^{2}2\pi\ln(1/r)=1,\end{align*}
and so
\begin{align*}
|b_{n}(r)|^{2} & =\begin{cases}
\frac{n+1}{\pi}\cdot\frac{1}{1-r^{2n+2}}, & n\neq-1,\\
\frac{1}{2\pi\ln(1/r)}, & n=-1.\end{cases}\end{align*}

Let $B_{r}(z,\zeta)$ and $S_{r}(z,\zeta)$
be the Bergman and Szeg\H{o} kernel on $\Omega_{r}$ respectively
and $z\in\Omega_{r}$. We have
\begin{align*}
\left(F_{B}^{\Omega_{r}}(z,1)\right)^{2} & =\frac{\partial\overline{\partial}\log B_{r}(z,z)}{\partial z\,\partial\overline{z}}\\
 & =\frac{B_{r}(z,z)\cdot\left(B_{r}(z,z)\right)_{z\overline{z}}-\left|\left(B_{r}(z,z)\right)_{z}\right|^{2}}{\left(B_{r}(z,z)\right)^{2}}\\
 & =\frac{\beta_{0}(z,r)\cdot\beta_{2}(z,r)-\left|\beta_{1}(z,r)\right|^{2}}{\left(\beta_{0}(z,r)\right)^{2}},\end{align*}
where \[
\beta_{0}(z,r)=B_{r}(z,z),\quad\beta_{1}(z,r)=\left(B_{r}(z,z)\right)_{z},\quad\mbox{and}\quad\beta_{2}(z,r)=\left(B_{r}(z,z)\right)_{z\overline{z}}.\]
We also get \[
\left(F_{S}^{\Omega_{r}}(z,1)\right)^{2}=\frac{\alpha_{0}(z,r)\cdot\alpha_{2}(z,r)-\left|\alpha_{1}(z,r)\right|^{2}}{\left(\alpha_{0}(z,r)\right)^{2}},\]
where\[
\alpha_{0}(z,r)=S{}_{r}(z,z),\quad\alpha_{1}(z,r)=\left(S_{r}(z,z)\right)_{z},\quad\mbox{and}\quad\alpha_{2}(z,r)=\left(S_{r}(z,z)\right)_{z\overline{z}}.\]
Let us calculate $\alpha_{j}(r^{q},r)$ for $j=0,1,2$,
$q>0$ and estimate $F_{S}^{\Omega_{r}}(r^{q},1)$:
\begin{align*}
2\pi\,\alpha_{0}(r^{q},r) & =\sum_{n\in\mathbb{Z}}\frac{1}{(1+r^{2n+1})}r^{2nq},\\
2\pi\,\alpha_{1}(r^{q},r) & =\sum_{n\in\mathbb{Z}}\frac{1}{\left(1+r^{2n+1}\right)}\cdot n\cdot r^{\left(2n-1\right)q},\\
2\pi\,\alpha_{2}(r^{q},r) & =\sum_{n\in\mathbb{Z}}\frac{1}{\left(1+r^{2n+1}\right)}\cdot n^{2}\cdot r^{2\left(n-1\right)q}.\end{align*}

Note that
\begin{align*}
2\pi\,\alpha_{0}(\sqrt{r},r) & =\frac{2}{1+\sqrt{r}}+\frac{2r}{1+r^{3}}+O\left(r^{2}\right),\\
2\pi\,\alpha_{1}(\sqrt{r},r) & =\frac{-1}{\sqrt{r}\left(1+r\right)}-\frac{\sqrt{r}}{1+r^{3}}+O\left(r^{3/2}\right),\\
2\pi\alpha_{2}(\sqrt{r},r) & =\frac{1}{r\left(1+r\right)}+\frac{5}{1+r^{3}}+O\left(r\right),\end{align*}
and that
\begin{align*}
2\pi\,\alpha_{0}(\sqrt[5]{r},r) & =\frac{1}{1+r}+\frac{r^{2/5}}{1+r^{3}}+O\left(r^{3/5}\right),\\
2\pi\,\alpha_{1}(\sqrt[5]{r},r) & =\frac{r^{1/5}}{1+r^{3}}-\frac{r^{2/5}}{1+r}+O\left(r^{3/5}\right),\\
2\pi\,\alpha_{2}(\sqrt[5]{r},r) & =\frac{1}{1+r^{3}}+\frac{r^{1/5}}{1+r}+O\left(r^{2/5}\right),\end{align*}
which one can verify easily using the comparison test with the geometric
series. 
Therefore we get 
\[
\lim_{r\to0}r\cdot\left(F_{S}^{\Omega_{r}}(\sqrt{r},1)\right)^{2}=\frac{1}{4},\quad\mbox{and}\quad\lim_{r\to0}\left(F_{S}^{\Omega_{r}}(\sqrt[5]{r},1)\right)^{2}=1.\]

One can calculate $\beta_{j}(r^{q},r)$'s for $j=1,2,3$
and estimate $F_{B}^{\Omega_{r}}(r^{q},1)$ in a similar
way:
\begin{align*}
\pi\,\beta_{0}(r^{q},r) & =\left(\sum_{n\in\mathbb{Z}\setminus\left\{ -1\right\} }\frac{\left(n+1\right)}{1-r^{2n+2}}r^{2nq}\right)+\frac{1}{2r^{2q}\log\left(1/r\right)},\\
\pi\,\beta_{1}(r^{q},r) & =\left(\sum_{n\in\mathbb{Z}\setminus\left\{ -1\right\} }\frac{\left(n+1\right)}{1-r^{2n+2}}\cdot n\cdot r^{\left(2n-1\right)q}\right)-\frac{1}{2r^{3q}\log\left(1/r\right)},\\
\pi\,\beta_{2}(r^{q},r) & =\left(\sum_{n\in\mathbb{Z}\setminus\left\{ -1\right\} }\frac{\left(n+1\right)}{1-r^{2n+2}}n^{2}r^{\left(2n-2\right)q}\right)+\frac{1}{2r^{4q}\log\left(1/r\right)}.\end{align*}

We have
\begin{align*}
\pi\,\beta_{0}(\sqrt{r},r) & =\frac{1}{2r\log\left(1/r\right)}+\frac{2}{1-r^{2}}+O\left(r\right),\\
\pi\,\beta_{1}(\sqrt{r},r) & =-\frac{1}{2r^{3/2}\log\left(1/r\right)}-\frac{2}{\sqrt{r}\left(1-r^{2}\right)}+O\left(\sqrt{r}\right),\\
\pi\,\beta_{2}(\sqrt{r},r) & =\frac{1}{2r^{2}\log\left(1/r\right)}+\frac{4}{r\left(1-r^{2}\right)}+O\left(1\right),\end{align*}
and
\begin{align*}
\pi\,\beta_{0}(\sqrt[5]{r},r) & =\frac{1}{2r^{2/5}\log\left(1/r\right)}+\frac{1}{1-r^{2}}+O\left(r^{2/5}\right),\\
\pi\,\beta_{1}(\sqrt[5]{r},r) & =-\frac{1}{2r^{3/5}\log\left(1/r\right)}+\frac{2r^{1/5}}{1-r^{4}}+O\left(r^{3/5}\right),\\
\pi\,\beta_{2}(\sqrt[5]{r},r) & =\frac{1}{2r^{4/5}\log\left(1/r\right)}+\frac{2}{1-r^{4}}+O\left(r^{2/5}\right).\end{align*}
Therefore we get
\[
\lim_{r\to0}\frac{\left(F_{B}^{\Omega_{r}}(\sqrt{r},1)\right)^{2}}{\log\left(1/r\right)}=4\quad\mbox{and}\quad\lim_{r\to0}\frac{\left(F_{B}^{\Omega_{r}}(\sqrt[5]{r},1\right))^{2}}{\log\left(1/r\right)}=2.\]
\end{proof}

\begin{rem}  We note that the Szeg\H{o} and Bergman kernels of $\Omega_r$ can be written in closed form in terms of elliptic functions (see for example \cite{Burbea78}) though that is not particularly helpful for the computations above. 

\end{rem}

\vspace{0.7in}

David Barrett

Dept. of Mathematics, University of Michigan

Ann Arbor, MI 48109-1043 

USA

barrett@umich.edu

\bigskip{}

Lina Lee

Dept. of Mathematics, University of California Riverside

Riverside, CA 92521

USA

linalee@math.ucr.edu
\end{document}